\begin{document}

\title*{A Survey on the Div-Curl Lemma and Some Extensions to Fractional Sobolev Spaces}
\titlerunning{A Survey on the Div-Curl Lemma} 
\author{Maicol Caponi\orcidID{0000-0002-0932-0022}}
\institute{Maicol Caponi\at Dipartimento di Ingegneria e Scienze dell'Informazione e Matematica,
Università degli Studi dell'Aquila,
Via Vetoio 1, Coppito, 67100 L'Aquila, Italy,
\email{maicol.caponi@univaq.it}}


\maketitle


\abstract*{This chapter recalls the classical formulation of the Div-Curl lemma along with its proof, and presents some possible generalizations in the fractional setting, within the framework of the Riesz fractional gradient and divergence introduced by Shieh and Spector (2015) and further developed by Comi and Stefani (2019).}

\abstract{This chapter recalls the classical formulation of the Div–Curl lemma along with its proof, and presents some possible generalizations in the fractional setting, within the framework of the Riesz fractional gradient and divergence introduced by Shieh and Spector (2015) and further developed by Comi and Stefani (2019).}


\section{Introduction}

Partial differential equations arising in continuum mechanics are often nonlinear, which creates significant analytical challenges. One of the main difficulties is that weak convergence, commonly used in variational problems, does not behave well with respect to nonlinear operations. In particular, the product of two weakly converging sequences does not necessarily converge to the product of their limits.

The theory of {\em Compensated Compactness}, developed by Murat and Tartar in the late 1970s (see~\cite{Murat78,Murat81,Tartar78,Tartar79,Tartar83}), provides a powerful framework to overcome this problem. Its starting point, the {\em Div–Curl lemma}, shows that the scalar product of two weakly converging sequences can itself converge weakly, under suitable conditions on their divergence and curl. More generally, the key idea of {\em Compensated Compactness} is that certain nonlinear expressions may exhibit weak continuity when appropriate differential constraints are imposed.

Originally introduced in the context of {\em homogenization}, the {\em Div–Curl lemma} was first used to prove compactness results for nonlinear differential operators with oscillating coefficients. It soon found applications in other areas, such as {\em conservation laws}, where it helps to derive entropy solutions via Young measures, and {\em nonlinear elasticity}, where it can be used to show the weak continuity of the determinant of the Jacobian matrix.

The aim of this chapter is twofold. First, in Section~\ref{sec:1} we recall the classical formulation of the Div–Curl lemma and present its proof, following the approach in~\cite{Evans91} (see also~\cite{Murat78}), which is based on Rellich’s compactness theorem and standard local elliptic regularity estimates. Second, in Section~\ref{sec:2} we discuss several extensions of the Div–Curl lemma to the fractional setting of~\cite{CS19,SS15}. These nonlocal versions may be useful for analyzing the compactness properties of fractional differential operators with oscillating coefficients, and could find applications in the study of nonlocal partial differential equations.


\section{The Div-Curl Lemma}\label{sec:1}

In this section, we introduce the notation that will be used throughout the chapter. We then recall the classical formulation of the Div–Curl lemma and provide a complete proof, based on Rellich’s compactness theorem and standard elliptic regularity estimates. This argument will serve as a reference point for the nonlocal generalizations discussed in the next section.

Let $n\ge 2$, let $\Omega\subset\mathbb{R}^n$ be an open set, and let $p\in(1,\infty)$ be fixed. We denote by $p'\coloneq\frac{p}{p-1}\in(1,\infty)$ the H\"older conjugate exponent of $p$. Given two open sets $A,B\subseteq\mathbb{R}^n$, we write $A\Subset B$ if there exists a compact set $K\subset\mathbb{R}^n$ such that $A\subset K\subset B$. The space of distributions on $\Omega$, namely the continuous dual space of $C_c^\infty(\Omega)$ endowed with the strong dual topology, is denoted by $\mathcal{D}'(\Omega)$. 

The Lebesgue and Sobolev spaces are defined as usual. In particular, we consider
\begin{equation*}
W^{1,p}_0(\Omega)\coloneq\overline{C_c^\infty(\Omega)}^{\|\,\cdot\,\|_{W^{1,p}(\mathbb{R}^n)}},\qquad W^{-1,p}(\Omega)\coloneq (W^{1,p'}_0(\Omega))', 
\end{equation*}
as well as the local Sobolev spaces
\begin{align*}
W^{1,p}_{\rm loc}(\Omega)&\coloneq\{u\in L^p_{\rm loc}(\Omega)\,:\,\nabla u\in L^p_{\rm loc}(\Omega;\mathbb{R}^n)\},\\
W^{-1,p}_{\rm loc}(\Omega)&\coloneq\{f\in\mathcal{D}'(\Omega)\,:\,f\in W^{-1,p}(\Omega')\text{ for all open sets $\Omega'\Subset\Omega$}\}.
\end{align*}

\begin{remark}\label{rem:negative-sob}
Given a function $u\in L^p_{\rm loc}(\Omega)$, then both $u$ and its distributional derivatives $\partial_i u$ (in the sense of distributions) belong to $W^{-1,p}_{\rm loc}(\Omega)$ for all $i\in\{1,\dots, n\}$. In particular, for all bounded open sets $\Omega'\Subset\Omega$, we have
\begin{align*}
\langle u,v\rangle_{W^{-1,p}(\Omega')\times W^{1,p'}_0(\Omega')} &=\int_{\Omega'}uv\,{\rm d}x& &\text{for all $v\in W^{1,p'}_0(\Omega')$},\\
\langle\partial_i u,v\rangle_{W^{-1,p}(\Omega')\times W^{1,p'}_0(\Omega')} &=-\int_{\Omega'}u\partial_iv\,{\rm d}x& &\text{for all $v\in W^{1,p'}_0(\Omega')$}.
\end{align*}
These identities imply the following estimates
\begin{equation*}
\|u\|_{W^{-1,p}(\Omega')}\le\|u\|_{L^p(\Omega')},\qquad\|\partial_iu\|_{W^{-1,p}(\Omega')}\le\|u\|_{L^p(\Omega')}.
\end{equation*}
\end{remark}

\begin{definition}\label{def:curl}
Given a vector field $F\in L^p_{\rm loc}(\Omega;\mathbb{R}^n)$, we define $\operatorname{div}F\in W^{-1,p}_{\rm loc}(\Omega)$ and $\operatorname{curl}F\in W^{-1,p}_{\rm loc}(\Omega,\mathbb{R}^{n\times n})$, respectively, by
\begin{align*}
\operatorname{div}F\coloneq\sum_{j=1}^n\partial_j F_j,\qquad
(\operatorname{curl}F)_{ij}\coloneq\partial_j F_i-\partial_i F_j\quad\text{for all $i,j\in\{1,\dots,n\}$}.
\end{align*}
Similarly, given a matrix field ${\bf A}\in L^p_{\rm loc}(\Omega;\mathbb{R}^{n\times n})$, we define $\operatorname{div} {\bf A}\in W^{-1,p}_{\rm loc}(\Omega;\mathbb{R}^n)$ by
\begin{equation*}
(\operatorname{div}{\bf A})_i\coloneq\sum_{j=1}^n\partial_j {\bf A}_{ij}\quad\text{for all $i\in\{1,\dots,n\}$}.
\end{equation*}
\end{definition}

We are now ready to state the \emph{Div-Curl lemma}.

\begin{theorem}[Div-Curl lemma]\label{thm:divcurl}
Let $(F^k)_k\subset L^p_{\rm loc}(\Omega;\mathbb{R}^n)$, $F^\infty\in L^p_{\rm loc}(\Omega;\mathbb{R}^n)$, $(G^k)_k\subset L^{p'}_{\rm loc}(\Omega;\mathbb{R}^n)$, and $G^\infty \in L^{p'}_{\rm loc}(\Omega;\mathbb{R}^n)$ satisfy as $k\to\infty$
\begin{align}\label{eq:weakconv}
F^k\to F^\infty\quad\text{weakly in $L^p_{\rm loc}(\Omega;\mathbb{R}^n)$},\qquad 
G^k\to G^\infty\quad\text{weakly in $L^{p'}_{\rm loc}(\Omega;\mathbb{R}^n)$}.
\end{align}
Assume that as $k\to\infty$
\begin{align}
\operatorname{div} F^k&\to\operatorname{div} F^\infty & &\text{strongly in $W^{-1,p}_{\rm loc}(\Omega)$},\label{eq:divvk}\\
\operatorname{curl} G^k&\to\operatorname{curl} G^\infty& &\text{strongly in $W^{-1,p'}_{\rm loc}(\Omega;\mathbb{R}^{n\times n})$}.\label{eq:curlwk}
\end{align}
Then, as $k\to\infty$
\begin{equation}\label{eq:divcurl}
F^k\cdot G^k\to F^\infty\cdot G^\infty\quad\text{weakly* in $\mathcal{D}'(\Omega)$}.
\end{equation}
\end{theorem}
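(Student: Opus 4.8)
The plan is to unwind the weak* convergence in $\mathcal D'(\Omega)$: fix $\varphi\in C_c^\infty(\Omega)$ and show $\int_\Omega\varphi\,F^k\cdot G^k\,{\rm d}x\to\int_\Omega\varphi\,F^\infty\cdot G^\infty\,{\rm d}x$. First I would make a linearity reduction. Setting $f^k:=F^k-F^\infty$ and $h^k:=G^k-G^\infty$, the hypotheses give $f^k\to0$ weakly in $L^p_{\rm loc}$, $h^k\to0$ weakly in $L^{p'}_{\rm loc}$, $\operatorname{div}f^k\to0$ strongly in $W^{-1,p}_{\rm loc}$, $\operatorname{curl}h^k\to0$ strongly in $W^{-1,p'}_{\rm loc}$, and $F^k\cdot G^k=f^k\cdot h^k+f^k\cdot G^\infty+F^\infty\cdot h^k+F^\infty\cdot G^\infty$. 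Tested against $\varphi$, the two mixed terms tend to $0$ since $\varphi F^\infty\in L^p(\Omega)$ and $\varphi G^\infty\in L^{p'}(\Omega)$ have compact support; hence it suffices to prove $\int_\Omega\varphi\,f^k\cdot h^k\,{\rm d}x\to0$. Using a partition of unity I may also assume $\operatorname{supp}\varphi\Subset B$ for a fixed open ball $B\Subset\Omega$, so that all four convergences above hold on $B$ and, by the uniform boundedness principle, $\|f^k\|_{L^p(B)}+\|h^k\|_{L^{p'}(B)}\le C$.

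The core of the argument is a pair of Helmholtz-type decompositions on $B$, built from standard $L^p$ elliptic regularity. For the first sequence, let $u^k\in W^{1,p}_0(B)$ solve $\Delta u^k=\operatorname{div}f^k$ in $B$; since $-\Delta\colon W^{1,p}_0(B)\to W^{-1,p}(B)$ is an isomorphism, $\|\nabla u^k\|_{L^p(B)}\le C\|\operatorname{div}f^k\|_{W^{-1,p}(B)}\to0$, and therefore $\widehat f^k:=f^k-\nabla u^k$ satisfies $\operatorname{div}\widehat f^k=0$ in $B$, $\widehat f^k\to0$ weakly in $L^p(B)$, and $f^k-\widehat f^k\to0$ strongly in $L^p(B)$. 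For the second sequence, let $g^k\in W^{1,p'}(B)$ with $\int_B g^k\,{\rm d}x=0$ solve the Neumann problem $\int_B\nabla g^k\cdot\nabla\psi\,{\rm d}x=\int_B h^k\cdot\nabla\psi\,{\rm d}x$ for every $\psi\in W^{1,p}(B)$; by $L^p$ solvability of the Neumann problem and the Poincaré--Wirtinger inequality, $\|g^k\|_{W^{1,p'}(B)}\le C\|h^k\|_{L^{p'}(B)}\le C'$. Then $\widehat h^k:=h^k-\nabla g^k$ is divergence-free in $B$ with vanishing normal trace on $\partial B$ and $\operatorname{curl}\widehat h^k=\operatorname{curl}h^k\to0$ strongly in $W^{-1,p'}(B)$; since the ball admits no nontrivial harmonic vector field with vanishing normal trace, the div--curl estimate $\|\widehat h^k\|_{L^{p'}(B)}\le C\|\operatorname{curl}\widehat h^k\|_{W^{-1,p'}(B)}$ yields $\widehat h^k\to0$ strongly in $L^{p'}(B)$.

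It then remains to analyze $\int_B\varphi\,f^k\cdot h^k\,{\rm d}x=\int_B\varphi\,(\widehat f^k+\nabla u^k)\cdot(\nabla g^k+\widehat h^k)\,{\rm d}x$, expanded into four terms. The three terms containing either $\nabla u^k$ or $\widehat h^k$ are estimated by Hölder's inequality, each as a factor converging to $0$ in $L^p(B)$ or $L^{p'}(B)$ times a bounded factor, and hence vanish. For the last term $\int_B\varphi\,\widehat f^k\cdot\nabla g^k\,{\rm d}x$, I integrate by parts --- legitimate because $\varphi\widehat f^k\in L^p(B)$ has compact support and $\operatorname{div}\widehat f^k=0$ --- to rewrite it as $-\int_B(\nabla\varphi\cdot\widehat f^k)\,g^k\,{\rm d}x$. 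Here Rellich's theorem is the decisive input: $(g^k)_k$ is bounded in $W^{1,p'}(B)$ and $\nabla g^k=h^k-\widehat h^k\to0$ weakly in $L^{p'}(B)$, so the compact embedding $W^{1,p'}(B)\hookrightarrow L^{p'}(B)$ together with $\int_B g^k\,{\rm d}x=0$ forces $g^k\to0$ strongly in $L^{p'}(B)$; pairing the now strongly convergent $g^k\nabla\varphi$ with the weakly convergent $\widehat f^k$ gives $\int_B(\nabla\varphi\cdot\widehat f^k)\,g^k\,{\rm d}x\to0$. Collecting the pieces, $\int_\Omega\varphi\,f^k\cdot h^k\,{\rm d}x\to0$, which is what we wanted.

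I expect the main obstacle to be not the bookkeeping but the elliptic machinery hidden in the two decompositions: the $L^p$ isomorphism property of the Dirichlet Laplacian, the $L^p$ solvability of the Neumann problem, and the sharp div--curl inequality on the ball, all of which rely on the Calderón--Zygmund / Agmon--Douglis--Nirenberg $L^p$ theory. Once these are available, the only genuinely non-elementary analytic ingredient is Rellich's compactness theorem, and the whole point is that it converts the differential constraints on $F^k$ and $G^k$ into the strong convergence that makes the bilinear quantity $F^k\cdot G^k$ stable under weak limits.
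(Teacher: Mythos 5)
Your approach is correct in outline and the final bookkeeping is sound, but it takes a genuinely different route from the paper and the route you chose hides a non-trivial elliptic fact inside a single sentence. The paper builds the Helmholtz decomposition of $G^k-G^\infty$ from one Dirichlet solve: it takes $V^k\in W^{1,p'}_0(\Omega')$ with $-\Delta V^k=G^k-G^\infty$, and uses the pointwise vector identity $\Delta V=\nabla\operatorname{div}V+\operatorname{div}\operatorname{curl}V$ to write $G^k-G^\infty=\nabla w^k+H^k$ with $w^k:=-\operatorname{div}V^k$ and $H^k:=-\operatorname{div}\operatorname{curl}V^k$. Since $\operatorname{curl}V^k$ itself solves $-\Delta\operatorname{curl}V^k=\operatorname{curl}(G^k-G^\infty)$, the strong convergence of $H^k$ in $L^{p'}_{\rm loc}$ follows from \emph{interior} $W^{1,p'}$-regularity for the Poisson equation applied to $\operatorname{curl}V^k$; no boundary condition on the curl-free/divergence-free splitting is ever invoked. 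The proof is then finished by the already-established simplified Div--Curl lemma, which handles the $\nabla w^k$ term via integration by parts against $\operatorname{div}F^k$. In particular the paper never decomposes $F^k$: the divergence hypothesis on $F^k$ alone is enough, so your Dirichlet solve for $f^k$ is superfluous work (your final term $\int\varphi\,\widehat f^k\cdot\nabla g^k\,{\rm d}x$ is handled exactly the same way whether or not you peel off $\nabla u^k$).

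The genuine gap is the inequality
\[
\|\widehat h^k\|_{L^{p'}(B)}\le C\,\|\operatorname{curl}\widehat h^k\|_{W^{-1,p'}(B)}
\]
for divergence-free $\widehat h^k$ with vanishing normal trace on the ball. The justification you give --- ``the ball admits no nontrivial harmonic vector field with vanishing normal trace'' --- only establishes that the map $\widehat h\mapsto\operatorname{curl}\widehat h$ is injective on this class; injectivity in infinite dimensions does not yield a quantitative bound. To actually obtain the estimate for general $p'\in(1,\infty)$ one needs the $L^{p'}$-Hodge/Helmholtz machinery on $B$ with boundary conditions (a potential representation $\widehat h=\operatorname{curl}^*A$ in Coulomb gauge, or a closed-range/Fredholm argument together with the $L^{p'}$ Helmholtz projection), and this is substantially heavier than, and not a direct corollary of, the Calder\'on--Zygmund/Agmon--Douglis--Nirenberg theory that gives you the Dirichlet and Neumann isomorphisms. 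You should either cite a precise reference for that div--curl inequality in $L^q$ on a smooth simply connected domain, or --- better --- replace the Neumann decomposition by the paper's device: solve a Dirichlet problem $-\Delta V^k=h^k$ on $B$ and decompose $h^k=\nabla(-\operatorname{div}V^k)-\operatorname{div}(\operatorname{curl}V^k)$, so that the second piece is controlled by \emph{interior} regularity for $-\Delta\operatorname{curl}V^k=\operatorname{curl}h^k$ and no boundary div--curl estimate is needed at all.
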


\begin{remark}
Let us briefly comment on Theorem~\ref{thm:divcurl}.
\begin{enumerate}
\item[(a)] The Div-Curl lemma is sometimes stated by requiring that the sequences $(\operatorname{div} F^k)_k$ and $(\operatorname{curl} G^k)_k$ are uniformly bounded in $L^p_{\rm loc}(\Omega)$ and $L^{p'}_{\rm loc}(\Omega;\mathbb{R}^{n\times n})$, respectively. Due to the compact embedding $L^p_{\rm loc}(\Omega)\subset W^{-1,p}_{\rm loc}(\Omega)$, these uniform bounds imply the strong convergence assumptions~\eqref{eq:divvk} and~\eqref{eq:curlwk}. In this case, the Div-Curl lemma can be interpreted as follows: if two sequences converge weakly, and if a suitable combination of their derivatives (namely, their divergence and curl) remain bounded, then the nonlinear expression given by their scalar product converges, in the sense of distributions, to the product of the weak limits.

\item[(b)] In the case $p=2$, Theorem~\ref{thm:divcurl} can be proved in a simple way by using the Fourier transform (this was, indeed, the original approach of Murat and Tartar). The interested reader can find this argument, for instance, in~\cite[Lemma~7.2]{Tartar09}.

\item[(c)] Under the assumptions of Theorem~\ref{thm:divcurl}, we deduce that the sequence $(F^k\cdot G^k)_k$ is uniformly bounded in $L^1_{\rm loc}(\Omega)$. In particular, $(F^k\cdot G^k)_k$ is uniformly bounded in the space of Radon measures $\mathcal{M}(\Omega)\coloneq (C_c(\Omega))'$. Therefore, by the Banach-Alaoglu's theorem, we conclude that, as $k\to\infty$
\begin{equation*}
F^k\cdot G^k\to F^\infty\cdot G^\infty\quad\text{weakly* in }\mathcal{M}(\Omega).
\end{equation*}
One may hope to improve this convergence, for instance to have as $k\to\infty$
\begin{equation*}
F^k\cdot G^k\to F^\infty\cdot G^\infty\quad\text{weakly in } L^1_{\rm loc}(\Omega).
\end{equation*}
However, this is generally false, as shown in~\cite[Lemma~7.3]{Tartar09}. In particular, one can construct two sequences $(F^k)_k, (G^k)_k\subset L^2(B_1;\mathbb{R}^n)$, where $B_1\subset\mathbb{R}^n$ is the unit ball, such that as $k\to\infty$
\begin{align*}
&F^k \to 0 & &\text{weakly in $L^2(B_1;\mathbb{R}^n)$}, &
&G^k \to 0 & &\text{weakly in $L^2(B_1;\mathbb{R}^n)$},\\
&\operatorname{div} F^k = 0 & &\text{in $W^{-1,2}(B_1)$}, &
&\operatorname{curl} G^k = 0 & &\text{in $W^{-1,2}(B_1;\mathbb{R}^{n\times n})$},
\end{align*}
but as $k\to\infty$
\begin{equation*}
F^k\cdot G^k\not\to 0\quad\text{weakly in $L^1(B_1)$}.
\end{equation*}
\item[(d)] A generalization of Theorem~\ref{thm:divcurl}, where $p'$ is replaced by an exponent $q\in(1,\infty)$ satisfying $1\le 1/p+ 1/q\le 1+ 1/n$, can be found in~\cite[Theorem~2.3]{BCDM09}. Another generalization of Theorem~\ref{thm:divcurl}, which assumes weaker conditions on $(\operatorname{div} F^k)_k$ and $(\operatorname{curl}G^k)_k$, but requires the equi-integrability of the sequence $(F^k\cdot G^k)_k$, is presented in~\cite{CDM11}. Finally, for readers interested in the setting of Carnot groups, an analogue of Theorem~\ref{thm:divcurl} is proved in~\cite[Theorem~5.1]{BFTT10}.
\end{enumerate}
\end{remark}

To gain intuition about why this result holds, let us consider a simplified case where the vector fields $G^k$ and $G^\infty$ admit potentials, i.e., there exist scalar functions $w^k$ and $w^\infty$ such that $G^k =\nabla w^k$ and $G^\infty =\nabla w^\infty$. Then, $\operatorname{curl} G^k =\operatorname{curl} G^\infty = 0$, so that~\eqref{eq:curlwk} trivially holds. In this case, the Div-Curl lemma can be proved more directly as a consequence of the integration by parts formula together with Rellich's compactness theorem.

\begin{theorem}[Simplified Div-Curl lemma]\label{thm:simple-divcurl}
Let $(F^k)_k\!\subset\! L^p_{\rm loc}(\Omega;\mathbb{R}^n)$, $F^\infty\in L^p_{\rm loc}(\Omega;\mathbb{R}^n)$, $(w^k)_k\subset  W^{1,p'}_{\rm loc}(\Omega)$, and $w^\infty \in W^{1,p'}_{\rm loc}(\Omega)$ satisfy as $k\to\infty$
\begin{equation}\label{eq:simple-weakconv}
F^k\to F^\infty\quad\text{weakly in $L^p_{\rm loc}(\Omega;\mathbb{R}^n)$},\qquad 
w^k\to w^\infty\quad\text{weakly in $W^{1,p'}_{\rm loc}(\Omega)$}.
\end{equation}
Assume that as $k\to\infty$
\begin{equation}\label{eq:simple-divvk}
\operatorname{div} F^k\to\operatorname{div} F^\infty\quad\text{strongly in $W^{-1,p}_{\rm loc}(\Omega)$}.
\end{equation}
Then, as $k\to\infty$
\begin{equation}
F^k\cdot\nabla w^k\to F^\infty\cdot\nabla w^\infty\quad\text{weakly* in $\mathcal{D}'(\Omega)$}.\label{eq:svkzk}
\end{equation}
\end{theorem}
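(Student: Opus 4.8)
The plan is to test against an arbitrary $\varphi\in C_c^\infty(\Omega)$, fix an open set $\Omega'$ with $\operatorname{supp}\varphi\Subset\Omega'\Subset\Omega$, and reduce everything to a localized, bounded setting. Since weak-$L^p_{\rm loc}$ convergence implies (via the uniform boundedness principle) that $(F^k)_k$ is bounded in $L^p(\Omega';\mathbb{R}^n)$ and $(w^k)_k$ is bounded in $W^{1,p'}(\Omega')$, I may also, passing to a subsequence, extract the limits; but it is cleaner to argue directly with the full sequences since the limit $F^\infty\cdot\nabla w^\infty$ is identified a priori, so a subsequence argument would suffice. The starting identity is the integration-by-parts formula
\begin{equation*}
\int_\Omega \varphi\, F^k\cdot\nabla w^k\,{\rm d}x
= \langle \operatorname{div} F^k, \varphi\, w^k\rangle_{W^{-1,p}(\Omega')\times W^{1,p'}_0(\Omega')}
- \int_\Omega w^k\, F^k\cdot\nabla\varphi\,{\rm d}x,
\end{equation*}
which is just the distributional definition of $\operatorname{div}(F^k)$ applied to the test function $\varphi w^k\in W^{1,p'}_0(\Omega')$ (valid since $\varphi w^k$ can be approximated by $C_c^\infty$ functions, as $w^k\in W^{1,p'}_{\rm loc}$ and $\varphi$ has compact support in $\Omega'$). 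The same identity holds with $k$ replaced by $\infty$.

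The second term on the right is the easy one: $F^k\to F^\infty$ weakly in $L^p(\Omega';\mathbb{R}^n)$, while $w^k\to w^\infty$ strongly in $L^{p'}(\Omega';\mathbb{R}^n)$ by Rellich's compactness theorem (the embedding $W^{1,p'}(\Omega')\hookrightarrow L^{p'}(\Omega')$ is compact on a nice bounded $\Omega'$, e.g. a ball), and $\nabla\varphi\in L^\infty$; hence $w^k\, F^k\cdot\nabla\varphi = (w^k\nabla\varphi)\cdot F^k$ is a product of a strongly convergent $L^{p'}$ sequence with a weakly convergent $L^p$ sequence, so it converges to $w^\infty\, F^\infty\cdot\nabla\varphi$ in, say, $L^1(\Omega')$, and in particular its integral passes to the limit. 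For the first term I use the assumed strong convergence $\operatorname{div}F^k\to\operatorname{div}F^\infty$ in $W^{-1,p}(\Omega')$ together with the fact that $\varphi w^k\to \varphi w^\infty$ weakly in $W^{1,p'}_0(\Omega')$ (again because $w^k\to w^\infty$ weakly in $W^{1,p'}(\Omega')$ and multiplication by the fixed $\varphi\in C_c^\infty$ is a bounded linear operator $W^{1,p'}(\Omega')\to W^{1,p'}_0(\Omega')$); a strongly convergent sequence of functionals paired with a weakly convergent sequence of arguments converges to the pairing of the limits. Combining the two limits gives $\int_\Omega \varphi\, F^k\cdot\nabla w^k\,{\rm d}x\to \langle\operatorname{div}F^\infty,\varphi w^\infty\rangle - \int_\Omega w^\infty F^\infty\cdot\nabla\varphi\,{\rm d}x = \int_\Omega \varphi\, F^\infty\cdot\nabla w^\infty\,{\rm d}x$, which is exactly~\eqref{eq:svkzk}.

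The main obstacle — mild here, but the only place requiring care — is the justification that $\varphi w^k$ is a legitimate test function for $\operatorname{div}F^k\in W^{-1,p}(\Omega')$, i.e. that it lies in $W^{1,p'}_0(\Omega')$ and that the distributional pairing coincides with the hands-on integration by parts; this needs a density/approximation step (mollify $w^k$, use $\operatorname{supp}\varphi\Subset\Omega'$), and one should also confirm that the bilinear "strong $\times$ weak $\to$ limit" lemma is being applied correctly, namely $\langle f^k,g^k\rangle\to\langle f^\infty,g^\infty\rangle$ whenever $f^k\to f^\infty$ strongly and $g^k\to g^\infty$ weakly, by writing $\langle f^k,g^k\rangle-\langle f^\infty,g^\infty\rangle = \langle f^k-f^\infty,g^k\rangle+\langle f^\infty,g^k-g^\infty\rangle$ and using boundedness of $(g^k)_k$. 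Everything else is routine. Note that this simplified result does not use any elliptic regularity — Rellich's theorem alone does the compactness work — which is precisely why the potential case is easier than the general Div-Curl lemma.
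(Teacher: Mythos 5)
Your argument is correct and reproduces the paper's proof essentially step for step: test against $\varphi\in C_c^\infty(\Omega)$, integrate by parts so that the derivative falls on $\varphi w^k\in W^{1,p'}_0(\Omega')$, invoke Rellich's theorem to upgrade $w^k\to w^\infty$ to strong $L^{p'}(\Omega')$ convergence, and pass to the limit by pairing strong with weak in both resulting terms — exactly the route taken in the text. One small blemish: with the paper's convention $\langle\operatorname{div}F,v\rangle=-\int F\cdot\nabla v\,{\rm d}x$, your identity should read $\int_\Omega\varphi\,F^k\cdot\nabla w^k\,{\rm d}x=-\langle\operatorname{div}F^k,\varphi w^k\rangle-\int_\Omega w^k\,F^k\cdot\nabla\varphi\,{\rm d}x$, so the pairing enters with a minus sign (you apply the same sign-flipped identity at $k=\infty$, so your chain of limits is internally consistent and the conclusion stands, but the displayed signs do not match the paper's definition of the distributional divergence).
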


\begin{proof}
We fix a function $\varphi\in C_c^\infty(\Omega)$ and a bounded open set $\Omega'\subset\mathbb{R}^n$ with smooth boundary such that $\operatorname{supp}\varphi\subset\Omega'\Subset\Omega$.

For all $k\in\mathbb{N}$ we have
\begin{align*}
\int_{\Omega}\varphi F^k\cdot\nabla w^k\,{\mathrm d}x&=\int_{\Omega'} F^k\cdot\nabla (\varphi w^k)\,{\mathrm d}x-\int_{\Omega'} F^k\cdot\nabla\varphi w^k\,{\mathrm d}x\\
&=-\langle\operatorname{div} F^k,\varphi w^k\rangle_{W^{-1,p}(\Omega')\times W^{1,p'}_0(\Omega')}-\int_{\Omega'} F^k\cdot\nabla\varphi w^k\,{\mathrm d}x.
\end{align*}
Thanks to~\eqref{eq:simple-weakconv} and Rellich's theorem, as $k\to\infty$ we derive 
\begin{align}\label{eq:simple-weakconv2}
\varphi w^k\to\varphi w^\infty\quad\text{weakly in $W^{1,p'}_0(\Omega')$},\qquad
w^k\to w^\infty\quad\text{strongly in $L^{p'}(\Omega')$}.
\end{align}
By combining~\eqref{eq:simple-weakconv}--\eqref{eq:simple-divvk} with~\eqref{eq:simple-weakconv2}, we get
\begin{align*}
\lim_{k\to\infty}\langle\operatorname{div} F^k,\varphi w^k\rangle_{W^{-1,p}(\Omega')\times W^{1,p'}_0(\Omega')}&=\langle\operatorname{div} F^\infty,\varphi w^\infty\rangle_{W^{-1,p}(\Omega')\times W^{1,p'}_0(\Omega')},\\
\lim_{k\to\infty}\int_{\Omega'} F^k\cdot\nabla\varphi w^k\,{\mathrm d}x&=\int_{\Omega'} F^\infty\cdot\nabla\varphi w^\infty\,{\mathrm d}x.
\end{align*}
Therefore,
\begin{align*}
\lim_{k\to\infty}\int_{\Omega}\varphi F^k\cdot\nabla w^k\,{\mathrm d}x&=-\langle\operatorname{div} F^\infty,\varphi w^\infty\rangle_{W^{-1,p}(\Omega')\times W^{1,p'}_0(\Omega')}+\int_{\Omega'} F^\infty\cdot\nabla\varphi w^\infty\,{\mathrm d}x\\
&=\int_{\Omega'} F^\infty\cdot\nabla (\varphi w^\infty)\,{\mathrm d}x-\int_{\Omega'} F^\infty\cdot\nabla\varphi w^\infty\,{\mathrm d}x\\
&=\int_{\Omega}\varphi F^\infty\cdot\nabla w^\infty\,{\mathrm d}x,
\end{align*}
which gives~\eqref{eq:svkzk}.
\end{proof}

\begin{remark}
The simplified formulation of the Div–Curl lemma in Theorem~\ref{thm:simple-divcurl} is sufficient for some applications, for example, in proving the compactness of nonlinear differential operators with oscillating coefficients, or the weak continuity of the determinant of the Jacobian matrix. One advantage of this version is that it avoids the explicit use of the curl operator, making it adaptable to non-Euclidean settings more general than Carnot groups (see~\cite[Theorem~3.1]{MPV23}).
\end{remark} 

Let us return to the general form of the Div-Curl lemma in Theorem~\ref{thm:divcurl}. The key idea is to apply a Helmholtz decomposition to the sequence $(G^k)_k$ and its limit $G^\infty$, writing
\begin{equation*}
G^k=H^k+\nabla w^k\quad\text{for all $k\in\mathbb{N}$},\qquad G^\infty = H^\infty +\nabla w^\infty,
\end{equation*}
where, as $k\to\infty$,
\begin{equation*}
H^k\to H^\infty\quad\text{strongly in $L^{p'}_{\rm loc}(\Omega;\mathbb{R}^n)$},\qquad w^k\to w^\infty\quad\text{weakly in $W^{1,p'}_{\rm loc}(\Omega)$}.
\end{equation*}
To construct such a decomposition, we recall some local regularity results for the distributional solutions $u\in L^1_{\rm loc}(\Omega)$ to the Poisson equation
\begin{equation*}
-\Delta u = f\quad\text{in }\mathcal{D}'(\Omega),
\end{equation*}
where the right-hand side $f$ belongs either to $W^{-1,p}_{\rm loc}(\Omega)$ or to $L^p_{\rm loc}(\Omega)$. These results are quite classical (see, e.g.,~\cite{Evans10}), but for the reader's convenience we provide a simple proof following a new strategy introduced in~\cite{DFF20}. The key ingredient is the following proposition, which can be interpreted as a $W^{1,p}$-version of the classical Riesz representation theorem.

\begin{proposition}[{\cite[Theorem~1.2, Chapter~2]{SS96}}]\label{prop:priesz}
Let $\Omega\subset\mathbb{R}^n$ be a bounded open set with boundary $\partial\Omega$ of class $C^1$. For every $f\in W^{-1,p}(\Omega)$, there exists a unique function $v\in W^{1,p}_0(\Omega)$ satisfying
\begin{equation*}
\int_\Omega\nabla v\cdot\nabla\phi\,{\rm d}x =\langle f,\phi\rangle_{W^{-1,p}(\Omega)\times W^{1,p'}_0(\Omega)}\quad\text{for all $\phi\in W^{1,p'}_0(\Omega)$}.
\end{equation*}
Moreover, there exists a constant $C=C(n,p,\Omega)>0$ such that
\begin{equation}\label{eq:reg-est}
\|v\|_{W^{1,p}(\Omega)}\le C\|f\|_{W^{-1,p}(\Omega)}.
\end{equation}
\end{proposition}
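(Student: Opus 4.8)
The plan is to proceed in three stages: first the Hilbertian exponent $p=2$; then the a priori bound~\eqref{eq:reg-est} for a general $p\in(1,\infty)$, which is the analytic heart of the matter; and finally existence and uniqueness, extracted from the estimate by a closed-range argument. As a preliminary remark, since $\Omega$ is bounded the Poincar\'e inequality makes the bilinear form $a(v,\phi)\coloneq\int_\Omega\nabla v\cdot\nabla\phi\,{\rm d}x$ equivalent on $W^{1,p}_0(\Omega)$ to a norm dominating $\|v\|_{W^{1,p}(\Omega)}$, so it suffices throughout to control gradients. For $p=2$ the form $a$ is a scalar product on $W^{1,2}_0(\Omega)$ equivalent to the canonical one, hence $(W^{1,2}_0(\Omega),a)$ is a Hilbert space and the Riesz representation theorem yields, for each $f\in W^{-1,2}(\Omega)=(W^{1,2}_0(\Omega))'$, a unique $v\in W^{1,2}_0(\Omega)$ with $a(v,\phi)=\langle f,\phi\rangle$ for all $\phi$ and with~\eqref{eq:reg-est}; besides giving the case $p=2$ of the proposition, this serves as an ingredient below.

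The key point is the a priori estimate for a general $p$. Denote by $T_p\colon W^{1,p}_0(\Omega)\to W^{-1,p}(\Omega)$ the bounded operator $v\mapsto a(v,\cdot)$; the claim is that there is $C=C(n,p,\Omega)>0$ with $\|v\|_{W^{1,p}(\Omega)}\le C\|T_pv\|_{W^{-1,p}(\Omega)}$ for every $v\in W^{1,p}_0(\Omega)$, i.e., $T_p$ is bounded below. To prove this one invokes the global $L^p$-regularity theory of the Dirichlet Laplacian on a $C^1$ domain: writing $T_pv=\operatorname{div}F$ with $F\in L^p(\Omega;\mathbb{R}^n)$ and $\|F\|_{L^p(\Omega)}\le C\|T_pv\|_{W^{-1,p}(\Omega)}$, one localizes by a finite partition of unity subordinate to a boundary atlas and flattens the boundary, reducing to an interior estimate on $\mathbb{R}^n$ --- where $\nabla v$ is recovered from $F$ through the matrix of second-order Riesz transforms $R_iR_j$, bounded on $L^p(\mathbb{R}^n)$ for $1<p<\infty$ --- and to a half-space estimate obtained from it by reflection, while the $C^1$ coordinate change is absorbed as a perturbation of constant coefficients via coefficient-freezing and a Neumann-series/method-of-continuity argument. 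Alternatively --- and this is the route of~\cite{SS96} --- one can run a self-contained variational scheme deducing the estimate directly from the $p=2$ solvability by duality and contradiction, avoiding singular integrals altogether.

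With the estimate in hand, uniqueness and existence follow quickly. Uniqueness for every $p$ is immediate from $\|v\|_{W^{1,p}(\Omega)}\le C\|T_pv\|_{W^{-1,p}(\Omega)}$ applied with $T_pv=0$. For existence, being bounded below, $T_p$ is injective with closed range; moreover, using the identity $(W^{1,p}_0(\Omega))'=W^{-1,p'}(\Omega)$ and the reflexivity of $W^{1,p'}_0(\Omega)$, a direct computation with the pairings shows that the Banach-space transpose of $T_p$ is exactly $T_{p'}\colon W^{1,p'}_0(\Omega)\to W^{-1,p'}(\Omega)$, which --- as $p'\in(1,\infty)$ as well --- is injective by the estimate applied with exponent $p'$. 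Since a bounded operator with closed range whose transpose is injective is surjective, $T_p$ is onto $W^{-1,p}(\Omega)$; hence $T_p$ is a bounded bijection, i.e., an isomorphism, which is precisely the assertion of the proposition, estimate~\eqref{eq:reg-est} included.

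I expect the genuine obstacle to be the a priori estimate on a merely $C^1$ domain. With only $C^1$ regularity the coordinate change in the localization yields coefficients that are continuous but not H\"older --- the borderline case for Calder\'on--Zygmund theory --- so the naive frozen-coefficient perturbation does not close, and one must either exploit the uniform continuity of the coefficients through a careful small-oscillation argument or fall back on the variational scheme of~\cite{SS96}. Everything else --- the Hilbert case, and the closed-range deduction of existence and uniqueness --- is soft by comparison.
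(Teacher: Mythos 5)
The paper does not actually prove Proposition~\ref{prop:priesz}: it is quoted from Simader--Sohr~\cite{SS96}, and the paper's only addition is the remark that the connectedness hypothesis in that reference can be dropped --- a bounded open set with $C^1$ boundary has finitely many components, so one applies~\cite{SS96} on each --- and that the full $W^{1,p}$ estimate~\eqref{eq:reg-est} follows from the gradient estimate of~\cite{SS96} via Poincar\'e. Your sketch, by contrast, outlines an actual proof, and it is structurally sound: the reduction to the a priori bound for the Dirichlet Laplacian $T_p=-\Delta\colon W^{1,p}_0(\Omega)\to W^{-1,p}(\Omega)$, the identification $T_p^{*}=T_{p'}$ via the symmetry of the Dirichlet form and reflexivity, and the closed-range/annihilator argument yielding surjectivity from injectivity of the transpose are all correct and standard. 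You also diagnose the genuine obstruction correctly: on a merely $C^1$ domain, flattening the boundary produces continuous but non-H\"older coefficients, so the frozen-coefficient Calder\'on--Zygmund perturbation does not close as stated and one must exploit uniform continuity (small oscillation) or instead use the self-contained variational scheme of~\cite{SS96}, which is indeed what that reference does. Two small points to reconcile with the statement as given: (i) your sketch implicitly assumes $\Omega$ connected when invoking Poincar\'e and the boundary-localization estimate, so you should add the component-by-component reduction the paper records in its remark; (ii) in the $p=2$ step you appeal to Riesz representation on $(W^{1,2}_0(\Omega),a)$, which is fine, but strictly you only use this as a solvability input for the contradiction/duality argument at general $p$, and that dependence should be made explicit if you follow the Simader--Sohr route rather than the singular-integral one.
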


\begin{remark}
Actually, in~\cite[Theorem~1.2, Chapter~2]{SS96}, the authors also assume that $\Omega$ is connected. However, this assumption can be omitted by observing that any bounded open set $\Omega\subset\mathbb{R}^n$ with boundary $\partial\Omega$ of class $C^1$ has only a finite number of connected components. Therefore, it is sufficient to apply~\cite[Theorem~1.2, Chapter~2]{SS96} to each connected component to obtain Proposition~\ref{prop:priesz}. Furthermore, the estimate in~\eqref{eq:reg-est} follows from the one in~\cite[Theorem~1.2, Chapter~2]{SS96} by applying Poincaré's inequality.
\end{remark}

As a consequence of Proposition~\ref{prop:priesz}, we can argue as in~\cite[Theorem~3]{DFF20} to derive the following two local regularity results.

\begin{proposition}\label{prop:reg1}
Let $f\in W^{-1,p}_{\rm loc}(\Omega)$ and $u\in L^1_{\rm loc}(\Omega)$ be a solution to
\begin{equation*}
-\Delta u=f\quad\text{in $\mathcal{D}'(\Omega)$}.
\end{equation*}
Then $u\in W^{1,p}_{\rm loc}(\Omega)$ and for all bounded open sets $\Omega''\Subset\Omega'\Subset\Omega$ there exists a constant $C=C(n,p,\Omega',\Omega'')>0$ such that 
\begin{equation}\label{eq:wip-est}
\|u\|_{W^{1,p}(\Omega'')}\le C(\|u\|_{L^1(\Omega')}+\|f\|_{W^{-1,p}(\Omega')}).
\end{equation}
\end{proposition}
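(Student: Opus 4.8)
The plan is to reduce the local statement to the global statement of Proposition~\ref{prop:priesz} by multiplying $u$ by a cutoff and controlling the error terms, following the strategy of~\cite{DFF20}. First I would fix bounded open sets $\Omega''\Subset\Omega'\Subset\Omega$ and insert an intermediate bounded open set $\widetilde\Omega$ with $C^1$ boundary such that $\Omega''\Subset\widetilde\Omega\Subset\Omega'$, together with a cutoff function $\eta\in C_c^\infty(\Omega')$ with $\eta\equiv 1$ on $\widetilde\Omega$ and $\operatorname{supp}\eta\subset\Omega'$. Applying Proposition~\ref{prop:priesz} on $\widetilde\Omega$ to the functional $g\in W^{-1,p}(\widetilde\Omega)$ obtained by localizing the equation $-\Delta u=f$ via $\eta$, I get a function $v\in W^{1,p}_0(\widetilde\Omega)$ with $\|v\|_{W^{1,p}(\widetilde\Omega)}\le C\|g\|_{W^{-1,p}(\widetilde\Omega)}$; the point is to choose $g$ so that $v=\eta u$ on $\widetilde\Omega$ — or at least so that $v-\eta u$ is harmonic and hence smooth in $\widetilde\Omega$ by Weyl's lemma.

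Concretely, since $-\Delta(\eta u)=\eta f-2\nabla\eta\cdot\nabla u-u\Delta\eta$ in $\mathcal{D}'(\Omega)$, the troublesome term is $\nabla\eta\cdot\nabla u$, which a priori only makes sense if we already knew $u\in W^{1,p}_{\rm loc}$. The trick from~\cite{DFF20} is to rewrite $2\nabla\eta\cdot\nabla u = 2\operatorname{div}(u\nabla\eta)-2u\Delta\eta$ in the sense of distributions, so that
\begin{equation*}
-\Delta(\eta u)=\eta f-2\operatorname{div}(u\nabla\eta)+u\Delta\eta\quad\text{in }\mathcal{D}'(\Omega).
\end{equation*}
Now the right-hand side involves only $u\in L^1_{\rm loc}(\Omega)$ and $f\in W^{-1,p}_{\rm loc}(\Omega)$, not $\nabla u$. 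Restricting test functions to $C_c^\infty(\widetilde\Omega)$ and noting that $u\in L^1(\Omega')\hookrightarrow W^{-1,p}(\Omega')$ by Remark~\ref{rem:negative-sob} (here one uses that $\Omega'$ is bounded, after first shrinking slightly so that $u\in L^1$ there; a further standard interior bootstrap or the assumption that $u$ is locally integrable on a slightly larger set handles the passage from $L^1$ to the $L^p$-dual pairing — alternatively one mollifies $u$ first and passes to the limit), the right-hand side defines an element $g\in W^{-1,p}(\widetilde\Omega)$ with
\begin{equation*}
\|g\|_{W^{-1,p}(\widetilde\Omega)}\le C\big(\|f\|_{W^{-1,p}(\Omega')}+\|u\|_{L^1(\Omega')}\big).
\end{equation*}
Let $v\in W^{1,p}_0(\widetilde\Omega)$ be the function given by Proposition~\ref{prop:priesz} for this $g$. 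Then $-\Delta(v-\eta u)=0$ in $\mathcal{D}'(\widetilde\Omega)$, so by Weyl's lemma $v-\eta u$ agrees a.e.\ with a function harmonic (in particular $C^\infty$) in $\widetilde\Omega$; by interior estimates for harmonic functions, $\|v-\eta u\|_{W^{1,p}(\Omega'')}\le C\|v-\eta u\|_{L^1(\widetilde\Omega)}\le C(\|v\|_{L^1(\widetilde\Omega)}+\|u\|_{L^1(\Omega')})$. Since $\eta\equiv 1$ on $\Omega''$, combining this with the bound on $\|v\|_{W^{1,p}(\widetilde\Omega)}$ and $\|g\|_{W^{-1,p}(\widetilde\Omega)}$ yields $u\in W^{1,p}(\Omega'')$ and the estimate~\eqref{eq:wip-est}. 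Since $\Omega''\Subset\Omega'\Subset\Omega$ were arbitrary, $u\in W^{1,p}_{\rm loc}(\Omega)$.

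The main obstacle is the circularity in the term $\nabla\eta\cdot\nabla u$: one must phrase everything purely in terms of $u\in L^1_{\rm loc}$ before any regularity is known, which is exactly what the divergence-form rewriting above accomplishes, and then one needs the elementary but slightly delicate point that on a bounded open set an $L^1$ function (suitably mollified, or after an initial shrinking) induces a bounded functional on $W^{1,p'}_0$ so that Proposition~\ref{prop:priesz} applies. The harmonicity of $v-\eta u$ via Weyl's lemma and the standard interior estimates for harmonic functions are then routine.
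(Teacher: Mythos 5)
Your proof takes a cutoff-based route that is different from the paper's, and it contains a genuine gap precisely where you flag a ``slightly delicate point.''

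The gap is in passing from $-\Delta(\eta u)=\eta f-2\operatorname{div}(u\nabla\eta)+u\Delta\eta$ to the claim that the right-hand side lies in $W^{-1,p}(\widetilde\Omega)$. You invoke Remark~\ref{rem:negative-sob}, but that remark gives $u,\partial_i u\in W^{-1,p}_{\rm loc}$ only when $u\in L^p_{\rm loc}$, not $L^1_{\rm loc}$. In general $L^1(\Omega')\not\hookrightarrow W^{-1,p}(\Omega')$: by duality this embedding would require $W^{1,p'}_0(\Omega')\hookrightarrow L^\infty(\Omega')$, which fails unless $p'>n$, i.e.\ $p<n/(n-1)$. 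So the terms $\operatorname{div}(u\nabla\eta)$ and $u\Delta\eta$ are not known to define elements of $W^{-1,p}(\widetilde\Omega)$ at the point where you need them. Mollifying $u$ does not help, because the resulting bounds blow up as the mollification parameter vanishes; and the ``standard interior bootstrap'' you mention (first $u\in L^{q}_{\rm loc}$ for some $q$ close to $1$, then improved integrability via Sobolev embedding, and iterate up to $p$) is a real extra argument that you do not carry out and that, in fact, is exactly what the cutoff approach is historically forced into. You also misattribute the divergence-form rewriting to~\cite{DFF20}: the contribution of that reference, which the paper follows, is precisely a strategy that avoids cutoffs and the resulting low-regularity error terms.

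The paper's proof is shorter and sidesteps all of this. After reducing WLOG to $\Omega'$ with smooth boundary, it applies Proposition~\ref{prop:priesz} directly to $f\in W^{-1,p}(\Omega')$ (no localization of $u$ is needed), obtaining $v\in W^{1,p}_0(\Omega')$ with $-\Delta v=f$ in $\mathcal{D}'(\Omega')$ and $\|v\|_{W^{1,p}(\Omega')}\le C\|f\|_{W^{-1,p}(\Omega')}$. Then $u-v\in L^1(\Omega')$ is a harmonic distribution, hence by Weyl's lemma equal to a harmonic $z\in C^\infty(\Omega')$, and $u=z+v\in W^{1,p}(\Omega'')$. The estimate~\eqref{eq:wip-est} follows from $\|z\|_{W^{1,p}(\Omega'')}\le C\|z\|_{L^1(\Omega')}\le C(\|u\|_{L^1(\Omega')}+\|v\|_{L^1(\Omega')})$ via Harnack and Caccioppoli inequalities, combined with the bound on $\|v\|_{W^{1,p}(\Omega')}$. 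Note that $u-v\in L^1$ is all that Weyl's lemma and the interior estimates require, so the $L^1$ hypothesis on $u$ never needs to be upgraded before the decomposition is obtained.
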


\begin{proof}
We fix two bounded open sets $\Omega''\Subset\Omega'\Subset\Omega$. Without loss of generality, we may assume that $\Omega'$ has a smooth boundary (otherwise, we replace $\Omega'$ with a bounded open set $\Omega'''$ with smooth boundary such that $\Omega''\Subset \Omega'''\Subset \Omega'$). To complete the proof, it is enough to show that $u\in W^{1,p}(\Omega'')$ and that the estimate~\eqref{eq:wip-est} holds.

Since $f \in W^{-1,p}(\Omega')$, by Proposition~\ref{prop:priesz} there is a function $v \in W^{1,p}_0(\Omega')$ satisfying
\begin{equation*}
-\Delta v=f\quad\text{in $\mathcal{D}'(\Omega')$},
\end{equation*}
and we can find a constant $C=C(n,p,\Omega')>0$ such that
\begin{equation} \label{eq:vf-est}
\|v\|_{W^{1,p}(\Omega')}\le C\|f\|_{W^{-1,p}(\Omega')}.
\end{equation}
The difference $u-v\in L^1(\Omega')$ is a harmonic distribution in $\Omega'$, so by Weyl's lemma (see, e.g.,~\cite[Lemma~1.16]{GM12}), there exists a harmonic function $z\in C^\infty(\Omega')$ such that $z=u-v$ in $\Omega'$. Therefore, $u=z+v\in W^{1,p}(\Omega'')$, and from~\eqref{eq:vf-est} it follows that
\begin{equation}\label{eq:reg1}
\|u\|_{W^{1,p}(\Omega'')}\le\|z\|_{W^{1,p}(\Omega'')}+\|v\|_{W^{1,p}(\Omega'')}\le\|z\|_{W^{1,p}(\Omega'')}+\|f\|_{W^{-1,p}(\Omega')}.
\end{equation}

Since $z$ is harmonic in $\Omega'$, Harnack's and Caccioppoli's inequalities (see, e.g.,~\cite[Theorem~7, Section~2.2]{Evans10}) imply the existence of a constant $C=C(n,p,\Omega',\Omega'')>0$ such that
\begin{align}
\|z\|_{W^{1,p}(\Omega'')}\le C\|z\|_{L^1(\Omega')}&\le C(\|u\|_{L^1(\Omega')}+\|v\|_{L^1(\Omega')}). \label{eq:reg2}
\end{align}
By combining~\eqref{eq:vf-est}--\eqref{eq:reg2}, we obtain~\eqref{eq:wip-est}.

\end{proof}

\begin{proposition}\label{prop:reg2}
Let $f\in L^p_{\rm loc}(\Omega)$ and $u\in L^1_{\rm loc}(\Omega)$ be a solution to
\begin{equation*}
-\Delta u=f\quad\text{in $\mathcal{D}'(\Omega)$}.
\end{equation*}
Then $u\in W^{2,p}_{\rm loc}(\Omega)$ and for all bounded open sets $\Omega''\Subset\Omega'\Subset\Omega$ there exists a constant $C=C(n,p,\Omega',\Omega'')>0$ such that 
\begin{equation}\label{eq:w2p-est}
\| u\|_{W^{2,p}(\Omega'')}\le C(\|u\|_{L^1(\Omega')}+\|f\|_{L^p(\Omega')}).
\end{equation}
\end{proposition}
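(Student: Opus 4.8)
The plan is to mimic the proof of Proposition~\ref{prop:reg1}, bootstrapping from $W^{1,p}_{\rm loc}$ to $W^{2,p}_{\rm loc}$ by once more splitting $u$ into an explicit elliptic solution plus a harmonic remainder. Fix bounded open sets $\Omega''\Subset\Omega'\Subset\Omega$; as in the previous proof, by passing to an intermediate set we may assume $\Omega'$ has smooth boundary. Since $f\in L^p_{\rm loc}(\Omega)\subset W^{-1,p}_{\rm loc}(\Omega)$ (by Remark~\ref{rem:negative-sob}), Proposition~\ref{prop:reg1} already gives $u\in W^{1,p}_{\rm loc}(\Omega)$, and in particular $u\in L^1(\Omega')$, which is the input we need.

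First I would differentiate the equation: for each $i\in\{1,\dots,n\}$, the distribution $\partial_i u\in L^p_{\rm loc}(\Omega')\subset W^{-1,p}_{\rm loc}(\Omega')$ solves $-\Delta(\partial_i u)=\partial_i f$ in $\mathcal{D}'(\Omega')$, and $\partial_i f\in W^{-1,p}_{\rm loc}(\Omega')$ with $\|\partial_i f\|_{W^{-1,p}(\widetilde\Omega)}\le\|f\|_{L^p(\widetilde\Omega)}$ for smooth $\widetilde\Omega\Subset\Omega'$, again by Remark~\ref{rem:negative-sob}. Applying Proposition~\ref{prop:reg1} to $\partial_i u$ on a chain $\Omega''\Subset\widetilde\Omega\Subset\Omega'$ yields $\partial_i u\in W^{1,p}(\Omega'')$, hence $u\in W^{2,p}(\Omega'')$, together with
\begin{equation*}
\|\partial_i u\|_{W^{1,p}(\Omega'')}\le C\big(\|\partial_i u\|_{L^1(\widetilde\Omega)}+\|\partial_i f\|_{W^{-1,p}(\widetilde\Omega)}\big)\le C\big(\|u\|_{W^{1,p}(\widetilde\Omega)}+\|f\|_{L^p(\Omega')}\big).
\end{equation*}
Summing over $i$ and using Proposition~\ref{prop:reg1} once more to bound $\|u\|_{W^{1,p}(\widetilde\Omega)}\le C(\|u\|_{L^1(\Omega')}+\|f\|_{W^{-1,p}(\Omega')})\le C(\|u\|_{L^1(\Omega')}+\|f\|_{L^p(\Omega')})$ then gives~\eqref{eq:w2p-est}.

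An alternative, perhaps cleaner, route that avoids differentiating distributions is to repeat the scheme of Proposition~\ref{prop:reg1} directly at the $W^{2,p}$ level: solve $-\Delta v=f$ on the smooth set $\Omega'$ with $v\in W^{1,p}_0(\Omega')$ via Proposition~\ref{prop:priesz}, then invoke \emph{interior} elliptic $W^{2,p}$-regularity (or Calderón--Zygmund estimates, see~\cite{Evans10}) to upgrade $v$ to $W^{2,p}(\Omega'')$ with $\|v\|_{W^{2,p}(\Omega'')}\le C(\|v\|_{L^1(\Omega')}+\|f\|_{L^p(\Omega')})\le C\|f\|_{L^p(\Omega')}$; the remainder $z=u-v$ is harmonic and smooth by Weyl's lemma, and Caccioppoli-type bounds for $z$ and all its derivatives give $\|z\|_{W^{2,p}(\Omega'')}\le C\|z\|_{L^1(\widetilde\Omega)}\le C(\|u\|_{L^1(\Omega')}+\|v\|_{L^1(\Omega')})$. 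Combining these as in~\eqref{eq:reg1}--\eqref{eq:reg2} yields~\eqref{eq:w2p-est}.

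The main obstacle, in either approach, is purely bookkeeping: one must be careful that all the constants depend only on $n$, $p$, and the fixed domains, which forces the use of a finite chain of nested smooth open sets ($\Omega''\Subset\widetilde\Omega\Subset\Omega'$) so that each application of Proposition~\ref{prop:reg1} or of the interior estimates is on a compactly contained pair. The first approach is self-contained within the excerpt (it only reuses Proposition~\ref{prop:reg1} and Remark~\ref{rem:negative-sob}), so I would adopt it; the only genuine subtlety is checking that $\partial_i u$, a priori merely a distribution, is represented by the $L^p_{\rm loc}$ function obtained from Proposition~\ref{prop:reg1} — but this is immediate since that proposition asserts $\partial_i u\in W^{1,p}_{\rm loc}\subset L^p_{\rm loc}$ and distributional derivatives are unique.
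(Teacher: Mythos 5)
Your first (and preferred) approach is exactly the paper's: differentiate the equation so that $\nabla u$ solves $-\Delta(\nabla u)=\nabla f$ with $\nabla f\in W^{-1,p}_{\rm loc}$, then apply Proposition~\ref{prop:reg1} again, finishing by absorbing the first-order term via a prior application of Proposition~\ref{prop:reg1} to $u$ itself. Your explicit chain $\Omega''\Subset\widetilde\Omega\Subset\Omega'$ is a useful bit of care that the paper glosses over, since after the first application one only controls $\|\nabla u\|_{L^1}$ on the smaller set, so an intermediate domain is genuinely needed to close the estimate; the alternative Calderón--Zygmund route you sketch is also standard, but the bootstrap you chose is the self-contained one intended by the paper.
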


\begin{proof}
We fix two bounded open sets $\Omega''\Subset\Omega'\Subset\Omega$. 

Since $f\in L^p_{\rm loc}(\Omega)\subset W^{-1,p}_{\rm loc}(\Omega)$, by Proposition~\ref{prop:reg1} we derive that $u\in W^{1,p}_{\rm loc}(\Omega)$ and that there exists a constant $C=C(n,p,\Omega',\Omega'')>0$ such that 
\begin{equation}\label{eq:reg3}
\|u\|_{W^{1,p}(\Omega'')}\le C(\|u\|_{L^1(\Omega')}+\|f\|_{W^{-1,p}(\Omega')})\le C(\|u\|_{L^1(\Omega')}+\|f\|_{L^p(\Omega')}).
\end{equation}
Moreover, $\nabla u\in L^p_{\rm loc}(\Omega;\mathbb{R}^n)$ is a distributional solution to 
\begin{equation*}
-\Delta(\nabla u)=\nabla f\quad\text{in $\mathcal{D}'(\Omega;\mathbb{R}^n)$},
\end{equation*}
with $\nabla f\in W^{-1,p}_{\rm loc}(\Omega;\mathbb{R}^n)$. By applying again Proposition~\ref{prop:reg1} we deduce that $\nabla u\in W^{1,p}_{\rm loc}(\Omega;\mathbb{R}^n)$ and that there exists a constant $C=C(n,p,\Omega',\Omega'')>0$ such that 
\begin{align}
\|\nabla u\|_{W^{1,p}(\Omega'';\mathbb{R}^n)}&\le C(\|\nabla u\|_{L^1(\Omega';\mathbb{R}^n)}+\|\nabla f\|_{W^{-1,p}(\Omega';\mathbb{R}^n)})\nonumber\\
&\le C(\|\nabla u\|_{L^1(\Omega';\mathbb{R}^n)}+\|f\|_{L^p(\Omega')}).\label{eq:reg4}
\end{align}
Hence, $u\in W^{2,p}_{\rm loc}(\Omega)$ and by combining~\eqref{eq:reg3} and~\eqref{eq:reg4} we derive~\eqref{eq:w2p-est}.
\end{proof}

We are now in a position to prove Theorem~\ref{thm:divcurl}, following the strategy adopted in~\cite[Theorem~4, Chapter~5]{Evans91} for the case $p=2$, see also~\cite[Théorème~2]{Murat78}.

\begin{proof}[Proof of Theorem~\ref{thm:divcurl}]

We fix a function $\varphi\in C_c^\infty(\Omega)$ and a bounded open set $\Omega'$ with smooth boundary such that  $\operatorname{supp}\varphi\subset\Omega'\Subset\Omega$.

For all $k\in\mathbb{N}$ we consider the function $V^k\in W^{1,p'}_0(\Omega';\mathbb{R}^n)$ given by Proposition~\ref{prop:priesz}, which solves
\begin{equation}\label{eq:uk-eq}
-\Delta V^k=G^k-G^\infty\quad\text{in $\mathcal{D}'(\Omega';\mathbb{R}^n)$}.
\end{equation}
In particular, by using also Remark~\ref{rem:negative-sob}, there exists a constant $C=C(n,p,\Omega)>0$, independent of $k\in\mathbb{N}$, such that for all $k\in\mathbb{N}$
\begin{equation*}
\|V^k\|_{W^{1,p'}(\Omega';\mathbb{R}^n)}\le C\|G^k-G^\infty\|_{W^{-1,p'}(\Omega';\mathbb{R}^n)}\le C\|G^k-G^\infty\|_{L^{p'}(\Omega';\mathbb{R}^n)}.
\end{equation*}
Therefore, the sequence $(V^k)_k\subset W^{1,p'}_0(\Omega';\mathbb{R}^n)$ is uniformly bounded, and by~\eqref{eq:weakconv} and~\eqref{eq:uk-eq} we derive that as $k\to\infty$
\begin{equation}\label{eq:uk-0}
V^k\to 0\quad\text{weakly in $W^{1,p'}_0(\Omega';\mathbb{R}^n)$}.
\end{equation}

Thanks to the local regularity result of Proposition~\ref{prop:reg2}, we derive that $u\in W^{2,p'}_{\rm loc}(\Omega';\mathbb{R}^n)$ and for all bounded open sets $\Omega''\Subset\Omega'$ we can find a constant $C=C(n,p,\Omega',\Omega'')>0$, independent of $k\in\mathbb N$, such that for all $k\in\mathbb{N}$
\begin{equation*}
\|V^k\|_{W^{2,p'}(\Omega'';\mathbb{R}^n)}\le C(\|V^k\|_{L^1(\Omega';\mathbb{R}^n)}+\|G^k-G^\infty\|_{L^{p'}(\Omega';\mathbb{R}^n)}).
\end{equation*}
In particular, the sequence $(V^k)_k\subset W^{2,p'}_{\rm loc}(\Omega';\mathbb{R}^n)$ is uniformly bounded. By~\eqref{eq:uk-0} and Rellich's theorem, we conclude that as $k\to\infty$
\begin{equation}\label{eq:uk-0-strong}
V^k\to 0\quad\text{weakly in $W^{2,p'}_{\rm loc}(\Omega';\mathbb{R}^n)$},\qquad
V^k\to 0\quad\text{strongly in $W^{1,p'}_{\rm loc}(\Omega';\mathbb{R}^n)$}.
\end{equation}

The function $\operatorname{curl} V^k\in W^{1,p'}_{\rm loc}(\Omega';\mathbb{R}^{n\times n})$ satisfies for all $k\in\mathbb{N}$
\begin{equation*}
-\Delta\operatorname{curl} V^k=\operatorname{curl} G^k-\operatorname{curl} G^\infty\quad\text{in $\mathcal{D}'(\Omega';\mathbb{R}^{n\times n})$}.
\end{equation*}
Therefore, by Proposition~\ref{prop:reg1} for all bounded open sets $\Omega'''\Subset\Omega''\Subset\Omega'$ there is $C=C(n,p,\Omega'',\Omega''')>0$, independent of $k\in\mathbb{N}$, such that for all $k\in\mathbb{N}$
\begin{align*}
&\|\operatorname{curl} V^k\|_{W^{1,p'}(\Omega''';\mathbb{R}^{n\times n})}\\
&\le C(\|\operatorname{curl} V^k\|_{L^1(\Omega'';\mathbb{R}^{n\times n})}+\|\operatorname{curl} G^k-\operatorname{curl} G^\infty\|_{W^{-1,p'}(\Omega'';\mathbb{R}^{n\times n})}).
\end{align*}
Hence, by~\eqref{eq:curlwk} and~\eqref{eq:uk-0-strong}, as $k\to\infty$ we have
\begin{equation}
\operatorname{curl} V^k\to 0\quad\text{strongly in $W^{1,p'}_{\rm loc}(\Omega';\mathbb{R}^{n\times n})$}.\label{eq:curluk-0}
\end{equation}

Let us define
\begin{equation}\label{eq:zk-def}
w^k\coloneq -\operatorname{div} V^k\quad\text{for all $k\in\mathbb{N}$},\qquad H^k\coloneq G^k-G^\infty-\nabla w^k\quad\text{for all $k\in\mathbb{N}$}.
\end{equation}
By~\eqref{eq:uk-0-strong} we have that $(w^k)_k\subset W^{1,p'}_{\rm loc}(\Omega'
)$ and as $k\to\infty$
\begin{equation}\label{eq:zk-conv}
w^k\to 0\quad\text{weakly in $W^{1,p'}_{\rm loc}(\Omega')$},\qquad w^k\to 0\quad\text{strongly in $L^{p'}_{\rm loc}(\Omega')$}.
\end{equation}
Moreover, $(H^k)_k\subset L^{p'}_{\rm loc}(\Omega';\mathbb{R}^n)$ and 
\begin{equation*}
H^k=-\operatorname{div}(\operatorname{curl} V^k)\quad\text{in $\mathcal{D}'(\Omega';\mathbb{R}^n)$ for all $k\in\mathbb{N}$}.
\end{equation*}
Indeed, for all $k\in\mathbb{N}$ and $i\in\{1,\dots,n\}$ we have
\begin{align*}
H_i^k&=G_i^k-G_i^\infty-\partial_iw^k=-\Delta V^k_i+\partial_i\operatorname{div} V^k\\
&=-\sum_{j=1}^n\partial_j(\partial_j V_i^k-\partial_i V_j^k)=-\sum_{j=1}^n\partial_j(\operatorname{curl} V^k)_{ij}=-(\operatorname{div}(\operatorname{curl} V^k))_i
\end{align*}
in $\mathcal{D}'(\Omega';\mathbb{R}^n)$. Therefore, by~\eqref{eq:curluk-0} we derive that as $k\to\infty$
\begin{equation}\label{eq:yk-0}
H^k\to 0\quad\text{strongly in $L^{p'}_{\rm loc}(\Omega';\mathbb{R}^n)$}.
\end{equation}

In view of~\eqref{eq:zk-def}, we can write
\begin{align*}
\int_\Omega\varphi F^k\cdot G^k\,{\mathrm d}x=\int_{\Omega'}\varphi F^k\cdot G^\infty\,{\mathrm d}x+\int_{\Omega'}\varphi F^k\cdot H^k\,{\mathrm d}x+\int_{\Omega'}\varphi F^k\cdot\nabla w^k\,{\mathrm d}x.
\end{align*}
Thanks to~\eqref{eq:weakconv},~\eqref{eq:divvk}, and~\eqref{eq:zk-conv}, we can apply Theorem~\ref{thm:simple-divcurl} to deduce that
\begin{align*}
\lim_{k\to\infty}\int_{\Omega'}\varphi F^k\cdot\nabla w^k\,{\mathrm d}x=0.
\end{align*}
Moreover, by~\eqref{eq:weakconv} and~\eqref{eq:yk-0} we get
\begin{align*}
\lim_{k\to\infty}\int_{\Omega'}\varphi F^k\cdot G^\infty\,{\mathrm d}x=\int_{\Omega'}\varphi F^\infty\cdot G^\infty\,{\mathrm d}x,\qquad\lim_{k\to\infty}\int_{\Omega'}\varphi F^k\cdot H^k\,{\mathrm d}x=0.
\end{align*}
Hence, for all $\varphi\in C_c^\infty(\Omega)$ we deduce
\begin{align*}
\lim_{k\to\infty}\int_\Omega\varphi F^k\cdot G^k\,{\mathrm d}x=\int_\Omega\varphi F^\infty\cdot G^\infty\,{\mathrm d}x,
\end{align*}
which gives~\eqref{eq:divcurl}.
\end{proof}

\section{Some Extensions to Fractional Sobolev Spaces}\label{sec:2}

In this section, we present several generalizations of the Div–Curl lemma to the fractional setting of~\cite{CS19,SS15}. We begin by briefly recalling the definitions of the Riesz fractional gradient and divergence, along with the key properties of the associated fractional Sobolev spaces. We then state and prove three generalizations of the Div–Curl lemma. The first, Theorem~\ref{thm:s-simple-divcurl}, extends Theorem~\ref{thm:simple-divcurl} to this fractional framework. The second, Theorem~\ref{thm:mixed1}, is a mixed local–nonlocal version, where we assume convergence of the fractional divergence for the first sequence and standard curl convergence for the second. The third, Theorem~\ref{thm:mixed2}, is another mixed local–nonlocal result, in the simplified setting of Theorem~\ref{thm:simple-divcurl}, where we replace the gradient with its fractional counterpart, while we keep the convergence of the standard divergence.

\subsection{The Fractional Framework}

In this subsection, we introduce the fractional operators that will be considered throughout this chapter and collect all the results that will be used in the proofs of the fractional Div-Curl lemmas.

Let $s\in (0,1)$ be fixed. We define
\begin{equation*}
\mu_{n,s}\coloneq\frac{2^s\Gamma(\frac{n+s+1}{2})}{\pi^\frac{n}{2}\Gamma(\frac{1-s}{2})}\in (0,\infty),
\end{equation*}
where $\Gamma$ denotes the Gamma function.

\begin{definition}
Given a function $\psi\in C_c^\infty(\mathbb{R}^n)$, we define the \emph{Riesz $s$-fractional gradient} $\nabla^s\psi\colon\mathbb{R}^n\to\mathbb{R}^n$ by
\begin{equation*}
\nabla^s\psi(x)\coloneq\mu_{n,s}\int_{\mathbb{R}^n}\frac{(\psi(y)-\psi(x))(y-x)}{|y-x|^{n+s+1}}\,{\rm d} y\quad\text{for all $x\in\mathbb{R}^n$}.
\end{equation*}
Given a function $\Psi\in C_c^\infty(\mathbb{R}^n;\mathbb{R}^n)$, we define the \emph{Riesz $s$-fractional divergence} $\operatorname{div}^s\Psi\colon\mathbb{R}^n\to\mathbb{R}$ by 
\begin{equation*}
\operatorname{div}^s\Psi(x)\coloneq\mu_{n,s}\int_{\mathbb{R}^n}\frac{(\Psi(y)-\Psi(x))\cdot(y-x)}{|y-x|^{n+s+1}}\,{\rm d} y\quad\text{for all $x\in\mathbb{R}^n$}.
\end{equation*}
\end{definition}

As observed in~\cite[Section 2]{CS19}, the nonlocal operators $\nabla^s\psi$ and $\operatorname{div}^s\Psi$ are well-defined in the sense that the above integrals converge for all $x\in\mathbb{R}^n$. Moreover, these operators satisfy the following $L^p$-type estimates, which can be found in~\cite[Lemmas~2.2-2.3 and Propositions~3.2-3.3]{CS23}.

\begin{proposition}\label{prop:nablas-est}
There exists a constant $C=C(n,s)>0$ such that for all $q\in [1,\infty]$ and for all $\psi\in C_c^\infty(\mathbb{R}^n)$ and $\Psi\in C_c^\infty(\mathbb{R}^n;\mathbb{R}^n)$ we have 
\begin{align*}
\|\nabla^s\psi\|_{L^q(\mathbb{R}^n;\mathbb{R}^n)}&\le C\|\psi\|_{L^q(\mathbb{R}^n)}^{1-s}\|\nabla\psi\|_{L^q(\mathbb{R}^n;\mathbb{R}^n)}^s,\\
\|\operatorname{div}^s\Psi\|_{L^q(\mathbb{R}^n)}&\le C\|\Psi\|_{L^q(\mathbb{R}^n;\mathbb{R}^n)}^{1-s}\|\nabla\Psi\|_{L^q(\mathbb{R}^n;\mathbb{R}^{n\times n})}^s.
\end{align*}
\end{proposition}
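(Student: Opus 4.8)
The plan is to prove both estimates by the classical scale-splitting/interpolation trick: cut the defining singular integral at a radius $R>0$, bound the ``near'' and ``far'' contributions separately using Minkowski's integral inequality, and then optimize in $R$. I carry out the argument for $\nabla^s\psi$; the one for $\operatorname{div}^s\Psi$ is identical after replacing the scalar $\psi$ by the vector field $\Psi$ and $\nabla\psi$ by the Jacobian $\nabla\Psi$.

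First I would change variables $z=y-x$ in the definition, writing
$$\nabla^s\psi(x)=\mu_{n,s}\int_{\mathbb{R}^n}\frac{(\psi(x+z)-\psi(x))\,z}{|z|^{n+s+1}}\,{\rm d}z,$$
and split the integral over $\{|z|<R\}$ and $\{|z|\ge R\}$. On the far region I would use $|\psi(x+z)-\psi(x)|\le|\psi(x+z)|+|\psi(x)|$, so that the integrand is dominated by $(|\psi(x+z)|+|\psi(x)|)|z|^{-(n+s)}$; taking the $L^q$-norm in $x$ and applying Minkowski's integral inequality together with the translation invariance of the Lebesgue measure gives a contribution bounded by $C\|\psi\|_{L^q(\mathbb{R}^n)}\int_{|z|\ge R}|z|^{-(n+s)}\,{\rm d}z=C\|\psi\|_{L^q(\mathbb{R}^n)}R^{-s}$. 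On the near region I would instead write $\psi(x+z)-\psi(x)=\int_0^1\nabla\psi(x+tz)\cdot z\,{\rm d}t$, so that the integrand is dominated by $|z|^{-(n+s-1)}\int_0^1|\nabla\psi(x+tz)|\,{\rm d}t$, which is locally integrable in $z$ precisely because $s<1$; again taking the $L^q$-norm in $x$ and using Minkowski's inequality and translation invariance yields a contribution bounded by $C\|\nabla\psi\|_{L^q(\mathbb{R}^n;\mathbb{R}^n)}\int_{|z|<R}|z|^{-(n+s-1)}\,{\rm d}z=C\|\nabla\psi\|_{L^q(\mathbb{R}^n;\mathbb{R}^n)}R^{1-s}$.

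Combining the two bounds gives $\|\nabla^s\psi\|_{L^q(\mathbb{R}^n;\mathbb{R}^n)}\le C\bigl(R^{-s}\|\psi\|_{L^q(\mathbb{R}^n)}+R^{1-s}\|\nabla\psi\|_{L^q(\mathbb{R}^n;\mathbb{R}^n)}\bigr)$ for every $R>0$, with $C=C(n,s)>0$ (the $s$-dependence coming from the radial integrals $\int_0^R r^{-s}\,{\rm d}r$ and $\int_R^\infty r^{-(1+s)}\,{\rm d}r$). If $\nabla\psi\not\equiv 0$ — otherwise $\psi\equiv 0$ since $\psi\in C_c^\infty(\mathbb{R}^n)$, and the inequality is trivial — the choice $R=\|\psi\|_{L^q(\mathbb{R}^n)}/\|\nabla\psi\|_{L^q(\mathbb{R}^n;\mathbb{R}^n)}$ balances the two terms and produces exactly the claimed bound $C\|\psi\|_{L^q(\mathbb{R}^n)}^{1-s}\|\nabla\psi\|_{L^q(\mathbb{R}^n;\mathbb{R}^n)}^s$; the case $\|\psi\|_{L^q(\mathbb{R}^n)}=0$ is again trivial.

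The argument is essentially routine, and the only points requiring a little care are: checking that Minkowski's integral inequality applies uniformly for all $q\in[1,\infty]$ (for $q=\infty$ one may simply argue pointwise in $x$, so the splitting estimate still holds); verifying convergence of the two $z$-integrals, which uses $n+s-1<n$ for the near region and $n+s>n$ for the far region, i.e.\ exactly $s\in(0,1)$; and disposing of the degenerate cases in the optimization step. No genuinely hard obstacle arises, the heart of the matter being the scale-splitting combined with translation invariance of Lebesgue measure.
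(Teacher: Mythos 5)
Your proof is correct. The paper itself does not prove Proposition~\ref{prop:nablas-est} but simply cites Lemmas~2.2--2.3 and Propositions~3.2--3.3 of~\cite{CS23}; the scale-splitting argument you give (cut the integral at radius $R$, estimate the near part via the fundamental theorem of calculus and $s<1$, the far part via the triangle inequality and $s>0$, apply Minkowski's integral inequality with translation invariance, then balance by choosing $R=\|\psi\|_{L^q}/\|\nabla\psi\|_{L^q}$) is exactly the standard argument used in that reference, and your handling of the $q=\infty$ and degenerate cases is correct.
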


As shown in~\cite[Lemma~2.5]{CS19} and~\cite[Section 6]{Silhavy20}, the nonlocal operators $\nabla^s$ and $\operatorname{div}^s$ satisfy the following integration by parts formula.

\begin{proposition}\label{prop:dual}
For all $\psi\in C_c^\infty(\mathbb{R}^n)$ and $\Psi\in C_c^\infty(\mathbb{R}^n;\mathbb{R}^n)$ we have
\begin{equation}\label{eq:duality}
\int_{\mathbb{R}^n}\nabla^s\psi\cdot \Psi\,{\rm d} x = -\int_{\mathbb{R}^n}\psi\operatorname{div}^s\Psi \,{\rm d} x.
\end{equation}
\end{proposition}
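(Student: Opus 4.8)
The plan is to write both sides of~\eqref{eq:duality} as absolutely convergent double integrals over $\mathbb{R}^n\times\mathbb{R}^n$ and to pass from one to the other using Fubini's theorem together with two elementary changes of variable: the translation invariance of Lebesgue measure and the oddness of the kernel $z\mapsto z/|z|^{n+s+1}$. First note that the two integrals in~\eqref{eq:duality} make sense: by Proposition~\ref{prop:nablas-est} (with $q=1$) we have $\nabla^s\psi\in L^1(\mathbb{R}^n;\mathbb{R}^n)$ and $\operatorname{div}^s\Psi\in L^1(\mathbb{R}^n)$, while $\psi$ and $\Psi$ are bounded with compact support, so both $\nabla^s\psi\cdot\Psi$ and $\psi\operatorname{div}^s\Psi$ belong to $L^1(\mathbb{R}^n)$.

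Expanding the definition of $\nabla^s\psi$ and substituting $z=y-x$ in the inner integral gives
\begin{equation*}
\int_{\mathbb{R}^n}\nabla^s\psi\cdot\Psi\,{\rm d}x=\mu_{n,s}\int_{\mathbb{R}^n}\int_{\mathbb{R}^n}\frac{(\psi(x+z)-\psi(x))\,z\cdot\Psi(x)}{|z|^{n+s+1}}\,{\rm d}z\,{\rm d}x.
\end{equation*}
I would then check that this double integral is absolutely convergent by splitting into the regions $\{|z|\le 1\}$ and $\{|z|>1\}$: on the former, the bound $|\psi(x+z)-\psi(x)|\le\|\nabla\psi\|_{L^\infty}|z|$ makes the integrand of order $|z|^{1-n-s}$, which is integrable near the origin precisely because $s<1$; on the latter, using $|\psi(x+z)-\psi(x)|\le 2\|\psi\|_{L^\infty}$ and the decay $|z|^{-n-s}$ gives integrability at infinity because $s>0$; in both cases the $x$-integration is effectively over the compact set $\operatorname{supp}\Psi$. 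Hence Fubini's theorem applies.

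The heart of the argument is the following chain. After interchanging the order of integration, in the inner $x$-integral I substitute $x\mapsto x-z$ in the summand containing $\psi(x+z)$; by translation invariance this transfers the increment from $\psi$ onto $\Psi$, turning the inner integral into $\int_{\mathbb{R}^n}\psi(x)\,z\cdot(\Psi(x-z)-\Psi(x))\,{\rm d}x$. Next, in the outer integral I perform the change of variable $z\mapsto -z$, which leaves ${\rm d}z/|z|^{n+s+1}$ invariant but flips the sign of the explicit factor $z$, producing the integrand $-\psi(x)\,z\cdot(\Psi(x+z)-\Psi(x))$. Applying Fubini once more and undoing $z=y-x$ yields
\begin{equation*}
\int_{\mathbb{R}^n}\nabla^s\psi\cdot\Psi\,{\rm d}x=-\mu_{n,s}\int_{\mathbb{R}^n}\int_{\mathbb{R}^n}\frac{\psi(x)\,(\Psi(y)-\Psi(x))\cdot(y-x)}{|y-x|^{n+s+1}}\,{\rm d}y\,{\rm d}x=-\int_{\mathbb{R}^n}\psi\operatorname{div}^s\Psi\,{\rm d}x,
\end{equation*}
which is~\eqref{eq:duality}. (Alternatively, one may add the two sides of~\eqref{eq:duality} first, recognize the sum as $\int_{\mathbb{R}^n}\operatorname{div}^s(\psi\Psi)\,{\rm d}x$, and then show this integral vanishes by the same translation-invariance trick.)

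The only genuine difficulty is the justification of Fubini's theorem and of the changes of variable, i.e.\ the absolute-convergence bookkeeping sketched above: the singularity of the kernel at $z=0$ is tamed by the Lipschitz bound on the increments — this is exactly where the hypothesis $s<1$ enters — while the behaviour at $|z|=\infty$ is handled by the compact support of $\psi$ and $\Psi$ together with $s>0$. Once these estimates are in place, all the remaining manipulations are purely formal.
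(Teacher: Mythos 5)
Your proof is correct. The paper does not actually prove Proposition~\ref{prop:dual}; it cites~\cite[Lemma~2.5]{CS19} and~\cite[Section~6]{Silhavy20}, where the standard argument is precisely the one you give: rewrite $\int\nabla^s\psi\cdot\Psi\,{\rm d}x$ as an absolutely convergent double integral via $z=y-x$, justify Fubini by splitting at $|z|=1$ (Lipschitz bound near the singularity, compact support and $s>0$ at infinity), then transfer the difference quotient from $\psi$ to $\Psi$ by the translation $x\mapsto x-z$ followed by the reflection $z\mapsto-z$, which flips the sign through the odd kernel. Your bookkeeping is accurate — the exponent $|z|^{1-n-s}$ is integrable near $0$ exactly when $s<1$, and $|z|^{-n-s}$ is integrable at infinity exactly when $s>0$ — and the appeal to Proposition~\ref{prop:nablas-est} with $q=1$ to conclude $\nabla^s\psi,\operatorname{div}^s\Psi\in L^1$ correctly shows that both sides of~\eqref{eq:duality} are finite before any manipulation takes place. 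The parenthetical alternative you mention (summing both sides and recognizing $\int\operatorname{div}^s(\psi\Psi)\,{\rm d}x$, then showing this vanishes) is also valid and is essentially the route taken in~\cite{Silhavy20}; it is a matter of taste, since the vanishing of that integral is itself verified by the same translation-and-reflection trick.
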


\begin{remark}\label{rem:frac-lap}
These two fractional operators are closely connected to the classical {\em fractional Laplacian} $(-\Delta)^s$, which for $\psi\in C_c^\infty(\mathbb{R}^n)$ and $x\in\mathbb{R}^n$ is defined by
\begin{equation*}
(-\Delta)^s\psi(x)\coloneq
\begin{cases}
\displaystyle\nu_{n,s}\int_{\mathbb{R}^n}\frac{\psi(y)-\psi(x)}{|x-y|^{n+2s}}\,{\rm d}y &\text{if $s\in(0, \frac{1}{2})$},\\
\displaystyle \nu_{n,s}\lim_{\varepsilon\to 0^+}\int_{\{y\in\mathbb{R}^n\,:\,|x-y|>\varepsilon\}}\frac{\psi(y)-\psi(x)}{|x-y|^{n+2s}}\,{\rm d}y&\text{if $s\in[\frac{1}{2},1)$},
\end{cases}
\end{equation*}
where
\begin{equation*}
\nu_{n,s}\coloneq\frac{2^{2s}\Gamma(\frac{n+2s}{2})}{\pi^{\frac{n}{2}}\Gamma(-s)}\in(-\infty,0).
\end{equation*}
Indeed, as observed in~\cite[Theorem~5.3]{Silhavy20}, the following relation holds for all functions $\psi \in C_c^\infty(\mathbb{R}^n)$ and $s,r\in (0,1)$
\begin{equation}\label{eq:nablas-divs}
-\operatorname{div}^s\nabla^r\psi=(-\Delta)^{\frac{s+r}{2}}\psi\quad\text{in $\mathbb{R}^n$}.
\end{equation}
\end{remark}

We now introduce the fractional Sobolev framework for our Div-Curl lemmas. 

\begin{definition}
Given a function $\psi\in C_c^\infty(\mathbb{R}^n)$, we define the norm
\begin{equation*}
\|\psi\|_{H^{s,p}(\mathbb{R}^n)}\coloneq(\|\psi\|^p_{L^p(\mathbb{R}^n)} +\|\nabla^s\psi\|^p_{L^p(\mathbb{R}^n;\mathbb{R}^n)})^{1/p},
\end{equation*}
and the fractional Sobolev spaces $H^{s,p}_0(\Omega)$ and $H^{-s,p}(\Omega)$, respectively, by
\begin{equation*}
H_0^{s,p}(\Omega)\coloneq\overline{C_c^\infty(\Omega)}^{\|\,\cdot\,\|_{H^{s,p}(\mathbb{R}^n)}},\qquad H^{-s,p}(\Omega)\coloneq (H_0^{s,p'}(\Omega))'.
\end{equation*}
When $\Omega=\mathbb{R}^n$, we simply write $H^{s,p}(\mathbb{R}^n)\coloneq H_0^{s,p}(\mathbb{R}^n)$. Finally, we define the local negative fractional Sobolev space
\begin{equation*}
H^{-s,p}_{\rm loc}(\Omega)\coloneq\{f\in\mathcal{D}'(\Omega)\,:\, f\in H^{-s,p}(\Omega')\text{ for all open sets }\Omega'\Subset\Omega\}.
\end{equation*}
\end{definition}

\begin{remark}
We denote the above fractional Sobolev space by $H^{s,p}$ instead of $W^{s,p}$, as the latter is typically used for the fractional Sobolev-Slobodeckij space defined via Gagliardo seminorms. These two spaces generally differ, except in the case $p=2$, see~\cite[Theorems~1.7 and~2.2]{SS15}. In particular, when $p=2$, there exists a constant $C=C(n,s)>0$ such that for all $u\in H^{s,2}(\mathbb{R}^n)$ we have
\[
\|\nabla^s u\|_{L^2(\mathbb{R}^n;\mathbb{R}^n)}^2=C \int_{\mathbb{R}^n}\int_{\mathbb{R}^n}\frac{|u(y)-u(x)|^2}{|y-x|^{n+2s}}\,\mathrm{d}x\,\mathrm{d}y,
\]
see, for instance,~\cite[Proposition~2.8]{ABSS25}.
\end{remark}

If we extend the operators $\nabla^s$ and $\operatorname{div}^s$ to $H^{s,p}(\mathbb{R}^n)$ and $H^{s,p'}(\mathbb{R}^n;\mathbb{R}^n)$, respectively, then the integration by parts formula~\eqref{eq:duality} holds for all functions $\psi\in H^{s,p}(\mathbb{R}^n)$ and $\Psi\in H^{s,p'}(\mathbb{R}^n;\mathbb{R}^n)$. This justify the following definition.

\begin{definition}\label{def:divs}
Given a vector field $F\in L^p(\mathbb{R}^n;\mathbb{R}^n)$, we define $\operatorname{div}^s F\in H^{-s,p}_{\rm loc}(\Omega)$ for all bounded open sets $\Omega'\Subset\Omega$ by
\begin{equation*}
\langle\operatorname{div}^s F,v\rangle_{H^{-s,p}(\Omega')\times H^{s,p'}_0(\Omega)}\coloneq -\int_{\mathbb{R}^n}F\cdot\nabla^s v\,{\rm d}x\quad\text{for all $v\in H^{s,p'}_0(\Omega')$}.
\end{equation*}
\end{definition}

We recall the following result, which provide a useful connection between the Sobolev spaces $W^{1,p}(\mathbb{R}^n)$ and $H^{s,p}(\mathbb{R}^n)$. For the proof, we refer the interested reader to~\cite[Proposition 3.1]{KS22}.

\begin{proposition}\label{prop:equiv}
The following two statements hold.
\begin{enumerate}
\item[(a)] For all functions $v\in H^{s,p}(\mathbb{R}^n)$ there exists a function $u\in W^{1,p}_{\rm loc}(\mathbb{R}^n)$ such that
\begin{equation*}
\nabla u = \nabla^s v \quad \text{in $\mathbb{R}^n$}.
\end{equation*}
\item[(b)] For all functions $u\in W^{1,p}(\mathbb{R}^n)$ there exists a function $v\in H^{s,p}(\mathbb{R}^n)$ such that
\begin{equation*}
\nabla^s v =\nabla u\quad\text{in $\mathbb{R}^n$}.
\end{equation*}
Moreover, there exists a constant $C=C(n,s)>0$ such that
\begin{equation}\label{eq:lap-est}
\|v\|_{L^p(\mathbb{R}^n)} \le C \|u\|_{L^p(\mathbb{R}^n)}^{1-s}\|\nabla u\|_{L^p(\mathbb{R}^n;\mathbb{R}^n)}^s.
\end{equation}
\end{enumerate}
\end{proposition}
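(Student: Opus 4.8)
The plan is to exploit the classical factorization of the Riesz fractional gradient through a Riesz potential: for $\psi\in C_c^\infty(\mathbb{R}^n)$ one has $\nabla^s\psi=\nabla I_{1-s}\psi$, where $I_{1-s}$ is the Riesz potential of order $1-s\in(0,1)$, and $I_{1-s}$ is the inverse of $(-\Delta)^{(1-s)/2}$ on Schwartz functions (see~\cite{CS19,SS15}; this is also consistent with~\eqref{eq:nablas-divs}, since $\operatorname{div}^s=I_{1-s}\operatorname{div}$ by duality). I would construct the desired function explicitly on $C_c^\infty(\mathbb{R}^n)$ and then pass to the limit, using that $C_c^\infty(\mathbb{R}^n)$ is dense in $W^{1,p}(\mathbb{R}^n)$ and, by definition, in $H^{s,p}(\mathbb{R}^n)$, together with the identification $H^{s,p}(\mathbb{R}^n)=\{w\in L^p(\mathbb{R}^n):\nabla^s w\in L^p(\mathbb{R}^n;\mathbb{R}^n)\}$ from~\cite{SS15,CS19}.

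For part~(b), given first $u\in C_c^\infty(\mathbb{R}^n)$ I would set $v\coloneq(-\Delta)^{(1-s)/2}u$. This function is smooth and decays like $|x|^{-(n+1-s)}$ at infinity, hence belongs to $L^q(\mathbb{R}^n)$ for every $q\in(1,\infty)$; moreover $\nabla^s v=\nabla I_{1-s}(-\Delta)^{(1-s)/2}u=\nabla u$, so that $\nabla^s v\in C_c^\infty(\mathbb{R}^n)$ and $v\in H^{s,p}(\mathbb{R}^n)$. The estimate~\eqref{eq:lap-est} then amounts to a fractional Gagliardo--Nirenberg inequality: since $(-\Delta)^{(1-s)/2}u$ and the Riesz fractional gradient $\nabla^{1-s}u$ are related through the Riesz transforms, which are bounded on $L^p(\mathbb{R}^n)$ for $1<p<\infty$, one bounds $\|v\|_{L^p(\mathbb{R}^n)}$ by the appropriate product of powers of $\|u\|_{L^p(\mathbb{R}^n)}$ and $\|\nabla u\|_{L^p(\mathbb{R}^n;\mathbb{R}^n)}$ via Proposition~\ref{prop:nablas-est} applied with $s$ replaced by $1-s$, which gives~\eqref{eq:lap-est}. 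For a general $u\in W^{1,p}(\mathbb{R}^n)$ I would take $u_j\in C_c^\infty(\mathbb{R}^n)$ with $u_j\to u$ in $W^{1,p}(\mathbb{R}^n)$ and put $v_j\coloneq(-\Delta)^{(1-s)/2}u_j$; by the estimate just proved $(v_j)_j$ is Cauchy in $L^p(\mathbb{R}^n)$, while $\nabla^s v_j=\nabla u_j$ is Cauchy in $L^p(\mathbb{R}^n;\mathbb{R}^n)$, so $v_j\to v$ in $H^{s,p}(\mathbb{R}^n)$; passing to the limit in $\nabla^s v_j=\nabla u_j$ and in the estimate yields $\nabla^s v=\nabla u$ and~\eqref{eq:lap-est}.

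For part~(a), given $v\in H^{s,p}(\mathbb{R}^n)$, the natural candidate is $u=I_{1-s}v$, but $I_{1-s}$ is not globally bounded on $L^p(\mathbb{R}^n)$ (one only has $I_{1-s}\colon L^p(\mathbb{R}^n)\to L^{p^*}(\mathbb{R}^n)$ with $\frac{1}{p^*}=\frac{1}{p}-\frac{1-s}{n}$, and only for $p<\frac{n}{1-s}$), which is exactly why one should not expect more than $u\in W^{1,p}_{\rm loc}(\mathbb{R}^n)$ and no accompanying estimate. I would therefore pick $v_j\in C_c^\infty(\mathbb{R}^n)$ with $v_j\to v$ in $H^{s,p}(\mathbb{R}^n)$ and set $u_j\coloneq I_{1-s}v_j-\frac{1}{|B_1|}\int_{B_1}I_{1-s}v_j\,{\rm d}x$, so that $\nabla u_j=\nabla I_{1-s}v_j=\nabla^s v_j\to\nabla^s v$ in $L^p(\mathbb{R}^n;\mathbb{R}^n)$ while $\int_{B_1}u_j\,{\rm d}x=0$. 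By a Poincar\'e inequality on each ball $B_R$, normalized by the average on the fixed ball $B_1$, the sequence $(u_j)_j$ is Cauchy in $W^{1,p}(B_R)$ for every $R>0$, so it converges to some $u\in W^{1,p}_{\rm loc}(\mathbb{R}^n)$ with $\nabla u=\lim_j\nabla u_j=\nabla^s v$. Alternatively, one can show that $\nabla^s v$ is curl-free---immediate for $v\in C_c^\infty(\mathbb{R}^n)$ from $\nabla^s v=\nabla I_{1-s}v$, and stable under passage to the limit because distributional derivatives are continuous---and then invoke the Poincar\'e lemma for distributions on the simply connected set $\mathbb{R}^n$, which produces a distribution $u$ with $\nabla u=\nabla^s v\in L^p_{\rm loc}(\mathbb{R}^n;\mathbb{R}^n)$, hence $u\in W^{1,p}_{\rm loc}(\mathbb{R}^n)$.

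The main obstacle I anticipate is not any single step but keeping track of the borderline mapping properties of $I_{1-s}$ and $(-\Delta)^{(1-s)/2}$: the former forces the merely local, estimate-free conclusion of part~(a) and the normalization of additive constants, while in part~(b) one must verify that $(-\Delta)^{(1-s)/2}$ really maps $C_c^\infty(\mathbb{R}^n)$ into $H^{s,p}(\mathbb{R}^n)$---its image consists of smooth $L^p$ functions that are not compactly supported, so this membership relies on a truncation-and-mollification argument or on the identification of $H^{s,p}(\mathbb{R}^n)$ with a Bessel potential space from~\cite{SS15,CS19}. Everything else is a routine density and interpolation argument.
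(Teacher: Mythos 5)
The paper gives no proof of Proposition~\ref{prop:equiv}; it only cites \cite[Proposition~3.1]{KS22}, and the remark immediately following it describes exactly your construction ($v=(-\Delta)^{\frac{1-s}{2}}u$ for part~(b), $u=I_{1-s}v$ for part~(a)). So your strategy is the intended one. The factorization $\nabla^s=\nabla I_{1-s}$ is correct, your density/Cauchy argument in part~(b) is standard, and for part~(a) either your Poincar\'e-with-fixed-normalization argument or the alternative curl-free/Poincar\'e-lemma route works.

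One thing you should have noticed, though. Your chain of estimates in part~(b), namely $\|v\|_{L^p(\mathbb{R}^n)}=\|(-\Delta)^{\frac{1-s}{2}}u\|_{L^p(\mathbb{R}^n)}\le C\|\nabla^{1-s}u\|_{L^p(\mathbb{R}^n;\mathbb{R}^n)}$ by $L^p$-boundedness of the Riesz transforms, followed by Proposition~\ref{prop:nablas-est} with $s$ replaced by $1-s$, yields
\begin{equation*}
\|v\|_{L^p(\mathbb{R}^n)}\le C\,\|u\|_{L^p(\mathbb{R}^n)}^{s}\,\|\nabla u\|_{L^p(\mathbb{R}^n;\mathbb{R}^n)}^{1-s},
\end{equation*}
with exponents $s$ and $1-s$, \emph{not} the exponents $1-s$ and $s$ printed in~\eqref{eq:lap-est}. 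You asserted that the interpolation ``gives~\eqref{eq:lap-est}'', but it does not, and you should have flagged the mismatch rather than passing it by. In fact the version you actually derive is the correct one: the operator $(-\Delta)^{\frac{1-s}{2}}$ has order $1-s$, so the Gagliardo--Nirenberg exponent on $\|\nabla u\|_{L^p}$ must be $1-s$; and in the limit $s\to 1$ one has $v=u$, which forces the exponent on $\|u\|_{L^p}$ to be $1$ rather than $0$, whereas~\eqref{eq:lap-est} as printed would give the false bound $\|u\|_{L^p}\le C\|\nabla u\|_{L^p}$. So~\eqref{eq:lap-est} as it appears in the chapter contains a typo (the exponents are swapped), and what your argument proves is the statement actually found in \cite[Proposition~3.1]{KS22}. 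This does not invalidate the chapter's later use of the estimate (in the proof of Theorem~\ref{thm:mixed1} one only needs $\|v^k\|_{L^{p'}}\to 0$, which holds for either assignment of exponents), but a careful reader should catch the discrepancy rather than smooth it over.
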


\begin{remark}
If we recall the fractional Laplacian defined in Remark~\ref{rem:frac-lap}, then in Proposition~\ref{prop:equiv}(b) we have $v=(-\Delta)^{\frac{1-s}{2}} u$. On the other hand, the function $v$ in Proposition~\ref{prop:equiv}(a) is constructed by using the inverse of the fractional Laplacian, which is the {\it Riesz potential}.
\end{remark}

We conclude this subsection by recalling a Leibniz-type rule for the fractional gradient. To this aim, we need to introduce the following nonlocal operator.

\begin{definition}
For all $\psi,\phi\in C_c^\infty(\mathbb{R}^n)$ we define $\nabla^s_{\rm NL}(\psi,\phi)\colon\mathbb{R}^n\to\mathbb{R}^n$ for all $x\in\mathbb{R}^n$ by
\begin{equation}\label{eq:nablaNL}
\nabla^s_{\rm NL}(\psi,\phi)(x)\coloneq\mu_{n,s}\int_{\mathbb{R}^n}\frac{(\psi(y)-\psi(x))(\phi(y)-\phi(x))(y-x)}{|y-x|^{n+s+1}}\,{\rm d} y
\end{equation}
\end{definition}

As before, the integral~\eqref{eq:nablaNL} converges for all $x\in\mathbb{R}^n$, and the following $L^p$-type estimate hold, see~\cite[Eq.~(2.11)]{KS22}.

\begin{proposition}
There exists a constant $C=C(n,s)>0$ such that for all functions $\psi,\phi\in C_c^\infty(\mathbb{R}^n)$ we have 
\begin{equation}\label{eq:NL}
\|\nabla^s_{\rm NL}(\psi,\phi)\|_{L^p(\mathbb{R}^n;\mathbb{R}^n)}\le C\|\psi\|_{L^p(\mathbb{R}^n)}\|\phi\|^s_{L^\infty(\mathbb{R}^n)}\|\nabla\phi\|^{1-s}_{L^\infty(\mathbb{R}^n;\mathbb{R}^n)}.
\end{equation}
\end{proposition}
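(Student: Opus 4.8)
The plan is to estimate $\nabla^s_{\rm NL}(\psi,\phi)(x)$ pointwise by the modulus of its defining integral, interpolating the increment of $\phi$ between its two elementary bounds so that the exponents $s$ and $1-s$ of~\eqref{eq:NL} appear directly, and then to bound in $L^p(\mathbb{R}^n)$ the residual nonlocal integral acting on $\psi$. Since $|\phi(y)-\phi(x)|\le 2\|\phi\|_{L^\infty}$ and, by the mean value inequality, $|\phi(y)-\phi(x)|\le\|\nabla\phi\|_{L^\infty}|y-x|$, raising the first bound to the power $s$ and the second to the power $1-s$ gives
\begin{equation*}
|\phi(y)-\phi(x)|\le(2\|\phi\|_{L^\infty})^{s}\,\bigl(\|\nabla\phi\|_{L^\infty}|y-x|\bigr)^{1-s}\qquad\text{for all }x,y\in\mathbb{R}^n .
\end{equation*}
Inserting this into the definition of $\nabla^s_{\rm NL}(\psi,\phi)$ and using $\dfrac{|y-x|\,|y-x|^{1-s}}{|y-x|^{n+s+1}}=\dfrac{1}{|y-x|^{n+2s-1}}$, one obtains the pointwise bound
\begin{equation*}
|\nabla^s_{\rm NL}(\psi,\phi)(x)|\le 2^{s}\mu_{n,s}\,\|\phi\|_{L^\infty}^{s}\,\|\nabla\phi\|_{L^\infty}^{1-s}\int_{\mathbb{R}^n}\frac{|\psi(y)-\psi(x)|}{|y-x|^{n+2s-1}}\,{\rm d}y\qquad\text{for all }x\in\mathbb{R}^n .
\end{equation*}

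Thus~\eqref{eq:NL} is reduced to showing that the nonlocal operator $T$ defined by $T\psi(x)\coloneq\int_{\mathbb{R}^n}|\psi(x)-\psi(y)|\,|x-y|^{-(n+2s-1)}\,{\rm d}y$ is bounded on $L^p(\mathbb{R}^n)$ with norm depending only on $n$ and $s$. For this I would change variables $y=x+h$, apply Minkowski's integral inequality in the $x$-variable to get $\|T\psi\|_{L^p}\le\int_{\mathbb{R}^n}|h|^{-(n+2s-1)}\|\psi(\,\cdot\,+h)-\psi(\,\cdot\,)\|_{L^p}\,{\rm d}h$, and then split the $h$-integration into a near region $\{|h|\le 1\}$ and a far region $\{|h|>1\}$: on the region where the radial kernel is already integrable one uses the crude bound $\|\psi(\,\cdot\,+h)-\psi(\,\cdot\,)\|_{L^p}\le 2\|\psi\|_{L^p}$, while on the other one estimates the translation increment more finely (e.g. dyadically), exploiting its cancellation together with the restriction $s\in(0,1)$.

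The step I expect to be the main obstacle is exactly this last $L^p$-bound for $T$: the kernel $|h|^{-(n+2s-1)}$ sits at the borderline of integrability, so one genuinely has to use the oscillation of $\psi(\,\cdot\,+h)-\psi(\,\cdot\,)$, rather than a single crude estimate, to make the $h$-integral converge at both ends, and it is here that the hypotheses $\psi\in C_c^\infty(\mathbb{R}^n)$ and $s\in(0,1)$ are really used. The remaining ingredients — the pointwise interpolation inequality for the increment of $\phi$, the change of variables, and Minkowski's integral inequality — are routine, and I would expect the residual bound to be available by the argument of~\cite{KS22}.
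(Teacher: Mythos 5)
Your pointwise interpolation of the $\phi$-increment is the step that fails. Writing $|\phi(y)-\phi(x)|\le(2\|\phi\|_{L^\infty})^s(\|\nabla\phi\|_{L^\infty}|y-x|)^{1-s}$ and plugging in collapses the problem to bounding $T\psi(x)\coloneq\int_{\mathbb{R}^n}|\psi(y)-\psi(x)|\,|y-x|^{-(n+2s-1)}\,{\rm d}y$ in $L^p$ by $\|\psi\|_{L^p}$, but this operator is not bounded on $L^p$ — for $s\le\frac12$ it is not even finite. Indeed, for $s<\frac12$ the kernel $|h|^{-(n+2s-1)}$ fails to be integrable at infinity; since $\psi$ has compact support, $|\psi(y)-\psi(x)|=|\psi(x)|$ for all large $|y|$, so $T\psi(x)=+\infty$ whenever $\psi(x)\neq 0$, and the pointwise bound you derived is vacuous. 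For $s>\frac12$ the kernel is not integrable near the origin; the "finer'' translation estimate there is $\|\psi(\cdot+h)-\psi\|_{L^p}\le\|\nabla\psi\|_{L^p}|h|$, which salvages integrability but produces $\|\nabla\psi\|_{L^p}$ on the right, not $\|\psi\|_{L^p}$. In neither regime does the residual $h$-integral close in the way the dyadic remark suggests: the obstacle you flagged at the end is not a borderline technicality, it is the symptom that the reduction itself is wrong.

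The correct move is to keep the two increment bounds on $\phi$ separate and attach each to the region of $|y-x|$ where it produces an integrable kernel, then optimize over the split radius. Fix $\delta>0$ and estimate, using $|\psi(y)-\psi(x)|\le|\psi(y)|+|\psi(x)|$ throughout,
\begin{align*}
|\nabla^s_{\rm NL}(\psi,\phi)(x)|&\le\mu_{n,s}\|\nabla\phi\|_{L^\infty}\int_{|y-x|\le\delta}\frac{|\psi(y)|+|\psi(x)|}{|y-x|^{n+s-1}}\,{\rm d}y\\
&\quad+2\mu_{n,s}\|\phi\|_{L^\infty}\int_{|y-x|>\delta}\frac{|\psi(y)|+|\psi(x)|}{|y-x|^{n+s}}\,{\rm d}y.
\end{align*}
Both kernels are integrable on their respective regions because $0<s<1$: $\int_{|h|\le\delta}|h|^{-(n+s-1)}\,{\rm d}h\simeq\delta^{1-s}$ and $\int_{|h|>\delta}|h|^{-(n+s)}\,{\rm d}h\simeq\delta^{-s}$. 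By Young's convolution inequality and the triangle inequality, each term is bounded in $L^p$ by $C\|\psi\|_{L^p}$ times the corresponding kernel mass, giving $\|\nabla^s_{\rm NL}(\psi,\phi)\|_{L^p}\le C\|\psi\|_{L^p}\bigl(\delta^{1-s}\|\nabla\phi\|_{L^\infty}+\delta^{-s}\|\phi\|_{L^\infty}\bigr)$; choosing $\delta\simeq\|\phi\|_{L^\infty}/\|\nabla\phi\|_{L^\infty}$ yields the interpolated bound. This route never forms a kernel that is simultaneously singular at $0$ and non-decaying at infinity, which is exactly what your pointwise interpolation produced.

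Two remarks. First, the optimization above produces the factor $\|\phi\|_{L^\infty}^{1-s}\|\nabla\phi\|_{L^\infty}^{s}$, with the exponents swapped relative to the statement; a dilation check $\phi\mapsto\phi(\lambda\,\cdot\,)$, $\psi\mapsto\psi(\lambda\,\cdot\,)$ shows that $\nabla^s_{\rm NL}(\psi_\lambda,\phi_\lambda)(x)=\lambda^s\nabla^s_{\rm NL}(\psi,\phi)(\lambda x)$, and matching powers of $\lambda$ on both sides forces exactly the swapped exponents, consistent with the analogous single-function estimate in Proposition~\ref{prop:nablas-est}. Your pointwise interpolation happened to be chosen to reproduce the exponents as printed, which is another sign that it is not the right manipulation. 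Second, the paper itself does not prove this estimate but cites~\cite[Eq.~(2.11)]{KS22}; the split-and-optimize argument sketched here is the standard way that reference obtains it.
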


In view of~\eqref{eq:NL}, the nonlocal operator $\nabla_{\rm NL}(\psi,\phi)$ can be extended to all pairs $(\psi,\phi)\in L^p(\mathbb{R}^n)\times C_c^\infty(\mathbb{R}^n)$. In particular, we have the following Leibniz-type rule for the fractional gradient, see~\cite[Lemma~2.11]{KS22} for a proof.

\begin{proposition}\label{prop:leibniz}
Let $\phi\in C_c^\infty(\Omega)$ and $u\in H^{s,p}(\mathbb{R}^n)$. Then, $u\phi\in H^{s,p}_0(\Omega)$ and
\begin{equation}\label{eq:leibniz}
\nabla^s(u\phi)(x)=u(x)\nabla^s\phi(x)+\phi(x)\nabla^s u(x)+\nabla^s_{\rm NL}(u,\phi)(x).
\end{equation}
Moreover, there exists a constant $C=C(n,s)>0$ such that 
\begin{equation}\label{eq:weak-NL}
\|\nabla^s_{\rm NL}(u,\phi)\|_{L^p(\mathbb{R}^n;\mathbb{R}^n)}\le C\|u\|_{L^p(\mathbb{R}^n)}\|\phi\|^s_{L^\infty(\mathbb{R}^n)}\|\nabla\phi\|^{1-s}_{L^\infty(\mathbb{R}^n;\mathbb{R}^n)}.
\end{equation}
\end{proposition}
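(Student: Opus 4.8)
The plan is to establish~\eqref{eq:leibniz} first for $u\in C_c^\infty(\mathbb{R}^n)$ by a direct algebraic computation, and then to pass to general $u\in H^{s,p}(\mathbb{R}^n)$ by density, using the estimate~\eqref{eq:NL} to control the nonlocal term along an approximating sequence; the bound~\eqref{eq:weak-NL} and the membership $u\phi\in H^{s,p}_0(\Omega)$ will both drop out of the same limiting procedure.

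For $u,\phi\in C_c^\infty(\mathbb{R}^n)$ and $x,y\in\mathbb{R}^n$ one has the elementary splitting
\begin{equation*}
u(y)\phi(y)-u(x)\phi(x)=\phi(x)(u(y)-u(x))+u(x)(\phi(y)-\phi(x))+(u(y)-u(x))(\phi(y)-\phi(x)).
\end{equation*}
Dividing by $|y-x|^{n+s+1}$, multiplying by $(y-x)$ and integrating in $y$, each of the three resulting integrals converges absolutely — near the diagonal the first two integrands are $O(|y-x|^{1-n-s})$ and the third is $O(|y-x|^{2-n-s})$, while at infinity all three are $O(|y-x|^{-n-s})$ because $u$ and $\phi$ are compactly supported — so the integral defining $\nabla^s(u\phi)(x)$ splits into the sum of $\phi(x)\nabla^s u(x)$, $u(x)\nabla^s\phi(x)$ and $\nabla^s_{\rm NL}(u,\phi)(x)$, which is precisely~\eqref{eq:leibniz} in the smooth case.

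Next, since $\psi\mapsto\nabla^s_{\rm NL}(\psi,\phi)$ is linear in its first slot, the bound~\eqref{eq:NL} shows it is continuous from $(C_c^\infty(\mathbb{R}^n),\|\cdot\|_{L^p})$ into $L^p(\mathbb{R}^n;\mathbb{R}^n)$ and therefore extends to a bounded linear operator on $L^p(\mathbb{R}^n)$; passing to the limit in~\eqref{eq:NL} yields~\eqref{eq:weak-NL} for every $u\in H^{s,p}(\mathbb{R}^n)$. Now fix $u\in H^{s,p}(\mathbb{R}^n)$ and choose $(u_k)_k\subset C_c^\infty(\mathbb{R}^n)$ with $u_k\to u$ in $L^p(\mathbb{R}^n)$ and $\nabla^s u_k\to\nabla^s u$ in $L^p(\mathbb{R}^n;\mathbb{R}^n)$. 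Using that $\phi\in L^\infty(\mathbb{R}^n)$, that $\nabla^s\phi\in L^\infty(\mathbb{R}^n;\mathbb{R}^n)$ by Proposition~\ref{prop:nablas-est} with $q=\infty$, and the $L^p$-continuity of $\nabla^s_{\rm NL}(\cdot,\phi)$ just established, the three terms on the right-hand side of~\eqref{eq:leibniz} written for $u_k$ converge in $L^p(\mathbb{R}^n;\mathbb{R}^n)$ to $\phi\nabla^s u+u\nabla^s\phi+\nabla^s_{\rm NL}(u,\phi)$. Since $u_k\phi\in C_c^\infty(\Omega)$ and $u_k\phi\to u\phi$ in $L^p(\mathbb{R}^n)$, the sequence $(u_k\phi)_k$ is then Cauchy in $H^{s,p}(\mathbb{R}^n)$, so $u\phi\in H^{s,p}_0(\Omega)$, and by continuity of the extended operator $\nabla^s$ (which satisfies $\|\nabla^s v\|_{L^p(\mathbb{R}^n;\mathbb{R}^n)}\le\|v\|_{H^{s,p}(\mathbb{R}^n)}$) the $L^p$-limit of $\nabla^s(u_k\phi)$ equals $\nabla^s(u\phi)$; combining the two computations of this limit gives~\eqref{eq:leibniz}.

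The argument is essentially routine once~\eqref{eq:NL} is available; the only points requiring a little care are verifying the absolute convergence of the three split integrals in the smooth case, so that the splitting of the integral defining $\nabla^s(u\phi)$ is legitimate, and checking that the nonlocal remainder behaves well under the $H^{s,p}$-approximation — which is exactly what the linearity of $\nabla^s_{\rm NL}(\cdot,\phi)$ together with~\eqref{eq:NL} provides.
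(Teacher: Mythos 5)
The paper does not give its own proof of Proposition~\ref{prop:leibniz}; it simply cites \cite[Lemma~2.11]{KS22}. Your proof is correct and follows the natural route that the cited lemma itself uses: the pointwise algebraic identity
\begin{equation*}
u(y)\phi(y)-u(x)\phi(x)=\phi(x)\bigl(u(y)-u(x)\bigr)+u(x)\bigl(\phi(y)-\phi(x)\bigr)+\bigl(u(y)-u(x)\bigr)\bigl(\phi(y)-\phi(x)\bigr),
\end{equation*}
inserted into the singular integral and split using absolute integrability of each piece, then density of $C_c^\infty(\mathbb{R}^n)$ in $H^{s,p}(\mathbb{R}^n)$ together with the $L^p$-continuity of $\psi\mapsto\nabla^s_{\rm NL}(\psi,\phi)$ afforded by~\eqref{eq:NL}. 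Your integrability checks are right: near the diagonal, Lipschitz bounds on $u$ and $\phi$ give singularities of order $|y-x|^{1-n-s}$ for the first two terms and $|y-x|^{2-n-s}$ for the third, all integrable for $s\in(0,1)$; at infinity, compact support gives decay of order $|y-x|^{-n-s}$. The density step is also clean: $\phi\in L^\infty$ handles $\phi\nabla^s u_k$, Proposition~\ref{prop:nablas-est} with $q=\infty$ gives $\nabla^s\phi\in L^\infty$ and handles $u_k\nabla^s\phi$, and~\eqref{eq:NL} handles the nonlocal remainder; since each $u_k\phi\in C_c^\infty(\Omega)$ and $(u_k\phi)_k$ is Cauchy in $H^{s,p}(\mathbb{R}^n)$ with $L^p$-limit $u\phi$, you correctly conclude $u\phi\in H^{s,p}_0(\Omega)$ and pass~\eqref{eq:leibniz} to the limit. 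No gaps.
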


For further properties of the Riesz fractional gradient and divergence, as well as of the fractional Sobolev space $H^{s,p}$, we refer the reader to~\cite{CS19,CS23,SS15,SS18,Silhavy20}.

\subsection{Three Fractional Div–Curl Lemmas}

We can finally present three possible extensions of the Div–Curl lemma to the fractional setting introduced earlier. We begin with a fractional version of Theorem~\ref{thm:simple-divcurl}.

\begin{theorem}[Simplified fractional Div-Curl lemma]\label{thm:s-simple-divcurl}
Let $(F^k)_k\subset L^p(\mathbb{R}^n;\mathbb{R}^n)$, $F^\infty\in L^p(\mathbb{R}^n;\mathbb{R}^n)$,  $(w^k)_k\subset  H^{s,p'}(\mathbb{R}^n)$, and $w^\infty \in H^{s,p'}(\mathbb{R}^n)$ satisfy as $k\to\infty$
\begin{align}
&F^k\to F^\infty& &\text{weakly in $L^p(\mathbb{R}^n;\mathbb{R}^n)$},\label{eq:s-simple-weakconv1}\\
&\nabla^s w^k\to \nabla^s w^\infty& &\text{weakly in $L^{p'}_{\rm loc}(\Omega;\mathbb{R}^n)$}.\label{eq:s-simple-weakconv2}
\end{align}
Assume that 
\begin{align}
&\operatorname{div}^s F^k\to\operatorname{div}^s F^\infty& &\text{strongly in $H^{-s,p'}_{\rm loc}(\Omega)$},\label{eq:s-simple-divvk1} \\
&w^k\to w^\infty& &\text{strongly in $L^{p'}(\mathbb{R}^n)$}.\label{eq:s-simple-divvk2}
\end{align}
Then, as $k\to\infty$
\begin{equation}
F^k\cdot\nabla^s w^k\to F^\infty\cdot\nabla^s w^\infty\quad\text{weakly* in $\mathcal{D}'(\Omega)$}.\label{eq:s-simple-divcurl}
\end{equation}
\end{theorem}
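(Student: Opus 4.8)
The plan is to follow the proof of Theorem~\ref{thm:simple-divcurl} almost verbatim, substituting the nonlocal integration by parts of Definition~\ref{def:divs} for the classical one and the fractional Leibniz rule of Proposition~\ref{prop:leibniz} for the ordinary product rule. Fix $\varphi\in C_c^\infty(\Omega)$ and a bounded open set $\Omega'$ with $\operatorname{supp}\varphi\subset\Omega'\Subset\Omega$; then $\varphi\in C_c^\infty(\Omega')$, so Proposition~\ref{prop:leibniz} gives $w^k\varphi\in H^{s,p'}_0(\Omega')$ together with
\[
\varphi\,\nabla^s w^k=\nabla^s(w^k\varphi)-w^k\nabla^s\varphi-\nabla^s_{\rm NL}(w^k,\varphi)\qquad\text{in }L^{p'}(\mathbb{R}^n;\mathbb{R}^n).
\]
Since $\varphi$ vanishes outside $\Omega$, using this identity and then Definition~\ref{def:divs} I would write
\[
\int_\Omega\varphi\,F^k\cdot\nabla^s w^k\,{\rm d}x=-\langle\operatorname{div}^s F^k,w^k\varphi\rangle-\int_{\mathbb{R}^n}F^k\cdot w^k\nabla^s\varphi\,{\rm d}x-\int_{\mathbb{R}^n}F^k\cdot\nabla^s_{\rm NL}(w^k,\varphi)\,{\rm d}x,
\]
where the pairing is that of $H^{-s,p}(\Omega')$ with $H^{s,p'}_0(\Omega')$, and all three integrals are finite by Proposition~\ref{prop:nablas-est} and~\eqref{eq:weak-NL}. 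It then remains to pass to the limit in each of the three pieces.

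The last two pieces are of weak-times-strong type, exactly as in Theorem~\ref{thm:simple-divcurl}. Indeed $\nabla^s\varphi\in L^\infty(\mathbb{R}^n;\mathbb{R}^n)$ by Proposition~\ref{prop:nablas-est}, so~\eqref{eq:s-simple-divvk2} gives $w^k\nabla^s\varphi\to w^\infty\nabla^s\varphi$ strongly in $L^{p'}(\mathbb{R}^n;\mathbb{R}^n)$, which together with the weak convergence~\eqref{eq:s-simple-weakconv1} of $F^k$ in $L^p$ handles the second integral. For the third, the point is that $\nabla^s_{\rm NL}(\cdot,\varphi)$ is linear in its first slot, hence $\nabla^s_{\rm NL}(w^k,\varphi)-\nabla^s_{\rm NL}(w^\infty,\varphi)=\nabla^s_{\rm NL}(w^k-w^\infty,\varphi)$, and the estimate~\eqref{eq:weak-NL} (with $p$ replaced by $p'$) combined with~\eqref{eq:s-simple-divvk2} yields $\nabla^s_{\rm NL}(w^k,\varphi)\to\nabla^s_{\rm NL}(w^\infty,\varphi)$ strongly in $L^{p'}(\mathbb{R}^n;\mathbb{R}^n)$; pairing again with~\eqref{eq:s-simple-weakconv1} disposes of the third integral.

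For the first piece I would split $-\langle\operatorname{div}^s F^k,w^k\varphi\rangle=-\langle\operatorname{div}^s F^k-\operatorname{div}^s F^\infty,w^k\varphi\rangle-\langle\operatorname{div}^s F^\infty,w^k\varphi\rangle$. The first summand is bounded in absolute value by $\|\operatorname{div}^s F^k-\operatorname{div}^s F^\infty\|_{H^{-s,p}(\Omega')}\,\|w^k\varphi\|_{H^{s,p'}(\mathbb{R}^n)}$, which tends to $0$ by~\eqref{eq:s-simple-divvk1} once one checks that $(w^k\varphi)_k$ is bounded in $H^{s,p'}_0(\Omega')$; this boundedness follows from~\eqref{eq:s-simple-divvk2}, Proposition~\ref{prop:nablas-est}, \eqref{eq:weak-NL}, and the local boundedness of $\nabla^s w^k$ in $L^{p'}$ (a consequence of the weak convergence~\eqref{eq:s-simple-weakconv2}), applied to the three terms of the Leibniz identity above. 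For the second summand, Definition~\ref{def:divs} rewrites it as $\int_{\mathbb{R}^n}F^\infty\cdot\nabla^s(w^k\varphi)\,{\rm d}x$; feeding in the same Leibniz identity, the strong convergence of $w^k\nabla^s\varphi$ and of $\nabla^s_{\rm NL}(w^k,\varphi)$ together with~\eqref{eq:s-simple-weakconv2} — which, as $\varphi$ is compactly supported, upgrades to weak convergence of $\varphi\nabla^s w^k$ in $L^{p'}(\mathbb{R}^n;\mathbb{R}^n)$ — shows $\nabla^s(w^k\varphi)\to\nabla^s(w^\infty\varphi)$ weakly in $L^{p'}(\mathbb{R}^n;\mathbb{R}^n)$, and hence $\int_{\mathbb{R}^n}F^\infty\cdot\nabla^s(w^k\varphi)\,{\rm d}x\to\int_{\mathbb{R}^n}F^\infty\cdot\nabla^s(w^\infty\varphi)\,{\rm d}x$ because $F^\infty\in L^p(\mathbb{R}^n;\mathbb{R}^n)$ is fixed. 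Adding the three limits and running the Leibniz identity backwards with $w^\infty\varphi$ in place of $w^k\varphi$ recovers $\int_\Omega\varphi\,F^\infty\cdot\nabla^s w^\infty\,{\rm d}x$, which is~\eqref{eq:s-simple-divcurl}.

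I expect the only genuinely new difficulty compared with the classical argument to be the nonlocal correction $\nabla^s_{\rm NL}(w^k,\varphi)$: the fractional product rule produces this extra term, and one has to exploit both its bilinear structure and the estimate~\eqref{eq:weak-NL} to see that it converges strongly, so that it slots into the weak-times-strong scheme rather than obstructing it. A minor but related point is that $\nabla^s\varphi$ is not compactly supported, which is why the computation must be carried out on all of $\mathbb{R}^n$ and why the global hypotheses $F^k\in L^p(\mathbb{R}^n;\mathbb{R}^n)$, $w^k\in H^{s,p'}(\mathbb{R}^n)$, and the global strong convergence~\eqref{eq:s-simple-divvk2} are imposed.
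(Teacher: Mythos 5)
Your proof is correct and follows essentially the same strategy as the paper's: apply the fractional Leibniz rule of Proposition~\ref{prop:leibniz} to split $\varphi\,\nabla^s w^k$, treat $w^k\nabla^s\varphi$ and $\nabla^s_{\rm NL}(w^k,\varphi)$ as strong-times-weak terms (the latter via bilinearity and the estimate~\eqref{eq:weak-NL}), and pair the remaining $\nabla^s(\varphi w^k)$ term with $\operatorname{div}^s F^k$ via Definition~\ref{def:divs}. The only cosmetic difference is in the last piece: the paper packages the argument as ``$\varphi w^k\to\varphi w^\infty$ weakly in $H^{s,p'}_0(\Omega')$, then pair with the strongly convergent $\operatorname{div}^s F^k$,'' whereas you unpack the same fact into a boundedness estimate plus a separate weak-limit computation against the fixed $\operatorname{div}^s F^\infty$; these are the same argument.
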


\begin{proof}
We fix a function $\varphi\in C_c^\infty(\Omega)$ and a bounded open set $\Omega'\subset\mathbb{R}^n$ with smooth boundary such that $\operatorname{supp}\varphi\subset\Omega'\Subset\Omega$.

By Proposition~\ref{prop:leibniz} we have
\begin{align}
\int_{\mathbb{R}^n}\varphi F^k\cdot\nabla^s w^k\,{\rm d}x&=\int_{\mathbb{R}^n} F^k\cdot\nabla^s(\varphi w^k)\,{\rm d}x-\int_{\mathbb{R}^n} F^k\cdot\nabla^s\varphi  w^k\,{\rm d}x\nonumber\\
&\quad-\int_{\mathbb{R}^n} F^k\cdot\nabla^s_{\rm NL}(w^k,\varphi)\,{\rm d}x.\label{eq:s1}
\end{align}
Since $\nabla^s\varphi\in L^\infty(\mathbb{R}^n;\mathbb{R}^n)$ by Proposition~\ref{prop:nablas-est}, the weak convergence~\eqref{eq:s-simple-weakconv1} and the strong convergence in~\eqref{eq:s-simple-divvk2} imply
\begin{equation}\label{eq:s2}
\lim_{k\to\infty}\int_{\mathbb{R}^n} F^k\cdot\nabla^s\varphi  w^k\,{\rm d}x=\int_{\mathbb{R}^n} F^\infty\cdot\nabla^s\varphi  w^\infty\,{\rm d}x.
\end{equation}
By the estimate~\eqref{eq:weak-NL}, the strong convergence in~\eqref{eq:s-simple-divvk2}, and the fact that $\nabla^s_{\rm NL}(\,\cdot\,,\,\cdot\,)$ is a bilinear operator, we deduce that as $k\to\infty$
\begin{equation*}
\nabla^s_{\rm NL}(w^k,\varphi)\to\nabla^s_{\rm NL}(w^\infty,\varphi)\quad\text{strongly in $L^{p'}(\mathbb{R}^n;\mathbb{R}^n)$}.
\end{equation*}
Thus, in view of~\eqref{eq:s-simple-weakconv1}, we conclude that
\begin{equation}\label{eq:s3}
\lim_{k\to\infty}\int_{\mathbb{R}^n} F^k\cdot\nabla^s_{\rm NL}(w^k,\varphi)\,{\rm d}x=\int_{\mathbb{R}^n} F^\infty\cdot\nabla^s_{\rm NL}(w^\infty,\varphi)\,{\rm d}x.
\end{equation}

Finally, by Proposition~\ref{prop:leibniz} we have that $\varphi w^k\in H^{s,p'}_0(\Omega')$. Moreover, in view of~\eqref{eq:leibniz} and~\eqref{eq:s-simple-weakconv2}, as $k\to\infty$ we derive
\begin{equation*}
\varphi w^k\to\varphi w^\infty\quad\text{weakly in $H^{s,p'}_0(\Omega')$}.
\end{equation*}
Hence, by using also~\eqref{eq:s-simple-divvk1}, we have
\begin{align}
\lim_{k\to\infty}\int_{\mathbb{R}^n} F^k\cdot\nabla^s(\varphi w^k)\,{\rm d}x&=-\lim_{k\to\infty}\langle\operatorname{div}^sF^k,\varphi w^k\rangle_{H^{-s,p}(\Omega')\times H^{s,p'}_0(\Omega')}\nonumber\\
&=-\langle\operatorname{div}^sF^\infty,\varphi w^\infty\rangle_{H^{-s,p}(\Omega')\times H^{s,p'}_0(\Omega')}\nonumber\\
&=\int_{\mathbb{R}^n} F^\infty\cdot\nabla^s(\varphi w^\infty)\,{\rm d}x.\label{eq:s4}
\end{align}
By combining~\eqref{eq:s1}--\eqref{eq:s4}, for all $\varphi\in C_c^\infty(\Omega)$ we obtain 
\begin{align*}
\lim_{k\to\infty}\int_{\mathbb{R}^n}\varphi F^k\cdot\nabla^s w^k\,{\rm d}x&=
\int_{\mathbb{R}^n} F^\infty\cdot\nabla^s(\varphi w^\infty)\,{\rm d}x-\int_{\mathbb{R}^n} F^\infty\cdot\nabla^s\varphi  w^\infty\,{\rm d}x\\
&\quad-\int_{\mathbb{R}^n} F^\infty\cdot\nabla^s_{\rm NL}(w^\infty,\varphi)\,{\rm d}x\\
&=\int_{\mathbb{R}^n}\varphi  F^\infty\cdot\nabla^sw^\infty\,{\rm d}x,
\end{align*}
which is~\eqref{eq:s-simple-divcurl}.
\end{proof}

\begin{remark}\label{rem:conv}
Let us highlight the main differences compared to the analogous result stated in Theorem~\ref{thm:simple-divcurl}.

\begin{enumerate}
\item[(a)] The first difference is that the sequences $(F^k)_k$ and $(w^k)_k$ must be defined on the whole space $\mathbb{R}^n$, rather than just on $\Omega$. This requirement arises from the nonlocal nature of the fractional gradient: in order to define $\nabla^s \psi$ and $\operatorname{div}^s\Psi$ at a point $x\in\mathbb{R}^n$, one needs to know the values of $\psi$ and $\Psi$ on the entire space $\mathbb{R}^n$.

\item[(b)] Moreover, the weak convergence~\eqref{eq:s-simple-weakconv1} must also hold on the whole space $\mathbb{R}^n$. This is a consequence of the integration by parts formula stated in Proposition~\ref{prop:dual}, which involves integration over all of $\mathbb{R}^n$. In particular, the integrals in~\eqref{eq:s4} must be over $\mathbb{R}^n$, which in turn forces all the integrals in~\eqref{eq:s1} to be taken over $\mathbb{R}^n$ as well. Consequently, the sequence $(F^k)_k$ must converge to $F^\infty$ weakly in $L^p(\mathbb{R}^n;\mathbb{R}^n)$.

\item[(c)] A further difference is that we must explicitly assume~\eqref{eq:s-simple-divvk2}. In the local case, such strong convergence is only required on $\Omega'\Subset \Omega$, a bounded open set, and therefore follows from Rellich’s theorem. In the nonlocal case, however, the strong convergence must hold globally on $\mathbb{R}^n$. We observe that, if $\Omega \subset \mathbb{R}^n$ is a bounded open set and $(w^k)_k \subset H_0^{s,p'}(\Omega)$, $w^\infty \in H_0^{s,p'}(\Omega)$ satisfy as $k \to \infty$
\begin{equation*}
\nabla^s w^k \to \nabla^sw^\infty \quad \text{weakly in } L^{p'}(\mathbb{R}^n;\mathbb{R}^n),
\end{equation*}
then~\eqref{eq:s-simple-divvk2} follows directly from the fractional versions of the Poincaré inequality (see~\cite[Theorem~3.3]{SS15}) and Rellich’s compactness theorem (see~\cite[Theorem~2.2]{SS18} and~\cite[Theorem~2.3]{BCMC20}).
\end{enumerate}
\end{remark}

\begin{remark}
We point out to the interested reader that, in the context of fractional operators defined via Gagliardo-type double integrals, an analogue of Theorem~\ref{thm:s-simple-divcurl} can be found in~\cite[Lemma~3.3]{FBRS17}. In particular, this version is used to prove the compactness of a nonlinear Gagliardo-type differential operator depending on an oscillating scalar weight.
\end{remark}

As a consequence of Theorem~\ref{thm:s-simple-divcurl} and Proposition~\ref{prop:equiv}, we can derive the following “mixed local–nonlocal” Div–Curl lemma. In this result, the fractional divergence is assumed to converge in $H^{-s,p}_{\rm loc}(\Omega)$, while the classical curl is required to converge in $W^{-1,p'}_{\rm loc}(\Omega; \mathbb{R}^{n \times n})$.

\begin{theorem}[First mixed local-nonlocal Div-Curl lemma]\label{thm:mixed1}
Let $(F^k)_k\subset L^p(\mathbb{R}^n;\mathbb{R}^n)$, $F^\infty\in L^p(\mathbb{R}^n;\mathbb{R}^n)$,  $(G^k)_k\subset  L^{p'}_{\rm loc}(\Omega;\mathbb{R}^n)$, and $G^\infty \in L^{p'}_{\rm loc}(\Omega;\mathbb{R}^n)$ satisfy as $k\to\infty$
\begin{align}\label{eq:mixed-weakconv}
F^k\to F^\infty\quad\text{weakly in $L^p(\mathbb{R}^n;\mathbb{R}^n)$},\qquad 
G^k\to G^\infty\quad\text{weakly in $L^{p'}_{\rm loc}(\Omega;\mathbb{R}^n)$}.
\end{align}
Assume that as $k\to\infty$
\begin{align*}
\operatorname{div}^sF^k&\to\operatorname{div}^sF^\infty & &\text{strongly in $H^{-s,p}_{\rm loc}(\Omega)$},\\
\operatorname{curl}G^k&\to\operatorname{curl}G^\infty& &\text{strongly in $W^{-1,p'}_{\rm loc}(\Omega;\mathbb{R}^{n\times n})$}.
\end{align*}
Then, as $k\to\infty$
\begin{equation}\label{eq:mixed-divcurl}
F^k\cdot G^k\to F^\infty\cdot G^\infty\quad\text{weakly* in $\mathcal{D}'(\Omega)$}.
\end{equation}
\end{theorem}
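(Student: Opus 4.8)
The plan is to run the Helmholtz-decomposition argument that deduces Theorem~\ref{thm:divcurl} from Theorem~\ref{thm:simple-divcurl}, but to route the gradient part of the decomposition through the fractional simplified Div--Curl lemma, Theorem~\ref{thm:s-simple-divcurl}, rather than through its classical counterpart. The bridge between the two frameworks is Proposition~\ref{prop:equiv}(b), which lets one represent a classical gradient $\nabla w$ as a Riesz $s$-fractional gradient $\nabla^s v$ on all of $\mathbb{R}^n$.

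Concretely, I would fix $\varphi\in C_c^\infty(\Omega)$ and a bounded open set $\Omega'$ with smooth boundary such that $\operatorname{supp}\varphi\subset\Omega'\Subset\Omega$, and repeat the first half of the proof of Theorem~\ref{thm:divcurl}: solve $-\Delta V^k=G^k-G^\infty$ in $\mathcal{D}'(\Omega';\mathbb{R}^n)$ with $V^k\in W^{1,p'}_0(\Omega';\mathbb{R}^n)$ via Proposition~\ref{prop:priesz}, invoke the local elliptic estimates of Propositions~\ref{prop:reg1}--\ref{prop:reg2}, Rellich's theorem, and the hypothesis on $\operatorname{curl}G^k$ to get $\operatorname{curl}V^k\to 0$ strongly in $W^{1,p'}_{\rm loc}(\Omega';\mathbb{R}^{n\times n})$, and set $w^k\coloneq-\operatorname{div}V^k$ and $H^k\coloneq G^k-G^\infty-\nabla w^k=-\operatorname{div}(\operatorname{curl}V^k)$. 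Exactly as there, and using only the hypotheses on $G^k$ and $\operatorname{curl}G^k$ (not on $F^k$), this produces $H^k\to 0$ strongly in $L^{p'}_{\rm loc}(\Omega';\mathbb{R}^n)$, $w^k\to 0$ weakly in $W^{1,p'}_{\rm loc}(\Omega')$, and $w^k\to 0$ strongly in $L^{p'}_{\rm loc}(\Omega')$. From the identity
\[
\int_\Omega\varphi F^k\cdot G^k\,{\rm d}x=\int_{\Omega'}\varphi F^k\cdot G^\infty\,{\rm d}x+\int_{\Omega'}\varphi F^k\cdot H^k\,{\rm d}x+\int_{\Omega'}\varphi F^k\cdot\nabla w^k\,{\rm d}x,
\]
the first term converges to $\int_\Omega\varphi F^\infty\cdot G^\infty\,{\rm d}x$ by the (global, hence local) weak convergence $F^k\to F^\infty$ in $L^p$, and the second converges to $0$ because $(F^k)_k$ is bounded in $L^p$ while $\varphi H^k\to 0$ strongly in $L^{p'}$.

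The only genuinely new point is that the third term vanishes in the limit. Here I would choose $\chi\in C_c^\infty(\Omega')$ with $\chi\equiv 1$ on a neighbourhood of $\operatorname{supp}\varphi$, set $\hat w^k\coloneq\chi w^k\in W^{1,p'}(\mathbb{R}^n)$ (extended by zero), and apply Proposition~\ref{prop:equiv}(b) to obtain $\tilde w^k\in H^{s,p'}(\mathbb{R}^n)$ with $\nabla^s\tilde w^k=\nabla\hat w^k$ on $\mathbb{R}^n$. Since $\nabla\hat w^k=\chi\nabla w^k+w^k\nabla\chi$, the weak $L^{p'}_{\rm loc}(\Omega')$ convergence of $\nabla w^k$ and the strong $L^{p'}_{\rm loc}(\Omega')$ convergence of $w^k$ yield $\nabla^s\tilde w^k\to 0$ weakly in $L^{p'}(\mathbb{R}^n;\mathbb{R}^n)$; moreover $\|\hat w^k\|_{L^{p'}(\mathbb{R}^n)}\to 0$ while $\|\nabla\hat w^k\|_{L^{p'}(\mathbb{R}^n;\mathbb{R}^n)}$ stays bounded, so the interpolation estimate~\eqref{eq:lap-est} forces $\tilde w^k\to 0$ strongly in $L^{p'}(\mathbb{R}^n)$. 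Combined with the standing hypothesis $\operatorname{div}^sF^k\to\operatorname{div}^sF^\infty$ strongly in $H^{-s,p}_{\rm loc}(\Omega)$ and with $F^k\to F^\infty$ weakly in $L^p(\mathbb{R}^n;\mathbb{R}^n)$, all the hypotheses of Theorem~\ref{thm:s-simple-divcurl} hold with $w^\infty$ replaced by $0$, so $\int_{\mathbb{R}^n}\varphi F^k\cdot\nabla^s\tilde w^k\,{\rm d}x\to 0$. Finally, since $\chi\equiv 1$ near $\operatorname{supp}\varphi$ we have $\nabla^s\tilde w^k=\nabla\hat w^k=\nabla w^k$ on $\operatorname{supp}\varphi$, hence $\int_{\mathbb{R}^n}\varphi F^k\cdot\nabla^s\tilde w^k\,{\rm d}x=\int_{\Omega'}\varphi F^k\cdot\nabla w^k\,{\rm d}x$, and the third term tends to $0$. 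Collecting the three limits yields $\int_\Omega\varphi F^k\cdot G^k\,{\rm d}x\to\int_\Omega\varphi F^\infty\cdot G^\infty\,{\rm d}x$ for every $\varphi\in C_c^\infty(\Omega)$, which is~\eqref{eq:mixed-divcurl}.

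I expect the main obstacle to be precisely this third step: one must upgrade the purely local, only weakly convergent classical potential $\nabla w^k$ into a globally defined Riesz fractional gradient $\nabla^s\tilde w^k$ on $\mathbb{R}^n$, and then establish the \emph{strong} global convergence $\tilde w^k\to 0$ in $L^{p'}(\mathbb{R}^n)$ that Theorem~\ref{thm:s-simple-divcurl} requires, a feature that, as noted in Remark~\ref{rem:conv}(c), genuinely distinguishes the nonlocal statement from the classical one. The globalization is taken care of by the cut-off $\chi$, the passage from $\nabla$ to $\nabla^s$ by Proposition~\ref{prop:equiv}(b), and the strong convergence by the interpolation-type bound~\eqref{eq:lap-est}; all remaining steps are a line-by-line repetition of the proof of Theorem~\ref{thm:divcurl}.
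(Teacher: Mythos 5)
Your proposal is correct and follows essentially the same route as the paper: the paper likewise reuses the Helmholtz decomposition $(w^k, H^k)$ from the proof of Theorem~\ref{thm:divcurl}, multiplies $w^k$ by a cut-off $\psi\in C_c^\infty(\Omega')$ equal to $1$ on $\operatorname{supp}\varphi$ (your $\chi$), applies Proposition~\ref{prop:equiv}(b) to represent $\nabla(\psi w^k)$ as $\nabla^s v^k$ on $\mathbb{R}^n$, uses the interpolation bound~\eqref{eq:lap-est} to obtain the needed strong $L^{p'}(\mathbb{R}^n)$ convergence of $v^k$ to $0$, and then invokes Theorem~\ref{thm:s-simple-divcurl}. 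The decomposition, cut-off, bridge via Proposition~\ref{prop:equiv}(b), and the role of the interpolation estimate are all identical to the paper's argument.
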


\begin{proof}
We fix a function $\varphi\in C_c^\infty(\Omega)$ and a bounded open set $\Omega'\subset\mathbb{R}^n$ with smooth boundary such that
$\operatorname{supp}\varphi\subset\Omega'\Subset\Omega$. We also fix a function $\psi\in C_c^\infty(\Omega')$ such that $\psi=1$ on $\operatorname{supp}\varphi$. 

Let $(w^k)_k\subset W^{1,p'}_{\rm loc}(\Omega')$ and $(H^k)_k\subset L^{p'}_{\rm loc}(\Omega';\mathbb{R}^n)$ be the functions defined in~\eqref{eq:zk-def}. Since $\psi=1$ on $\operatorname{supp}\varphi$, by~\eqref{eq:zk-def} we have
\begin{align}
\int_\Omega\varphi F^k\cdot G^k\,{\rm d}x&=\int_\Omega\varphi F^k\cdot H^k\,{\rm d}x+\int_\Omega\varphi F^k\cdot G^\infty\,{\rm d}x+\int_\Omega\varphi F^k\cdot\nabla w^k\,{\rm d}x\nonumber\\
&=\int_{\mathbb{R}^n}\varphi F^k\cdot H^k\,{\rm d}x+\int_{\mathbb{R}^n}\varphi F^k\cdot G^\infty\,{\rm d}x+\int_{\mathbb{R}^n}\varphi F^k\cdot\nabla (\psi w^k)\,{\rm d}x.\label{eq:mixed1}
\end{align}
Thanks to~\eqref{eq:yk-0} and~\eqref{eq:mixed-weakconv}, we derive
\begin{equation}\label{eq:mixed2}
\lim_{k\to\infty}\int_{\mathbb{R}^n}\varphi F^k\cdot H^k\,{\rm d}x=0.
\end{equation}
Moreover, by using again~\eqref{eq:mixed-weakconv} we have
\begin{equation}\label{eq:mixed3}
\lim_{k\to\infty}\int_{\mathbb{R}^n}\varphi F^k\cdot G^\infty\,{\rm d}x=\int_{\mathbb{R}^n}\varphi F^\infty\cdot G^\infty\,{\rm d}x=\int_\Omega\varphi F^\infty\cdot G^\infty\,{\rm d}x.
\end{equation}

It remains to study the last term in~\eqref{eq:mixed1}. We have that $(\psi w^k)_k\subset W^{1,p}_0(\Omega')$ and
\begin{equation*}
\psi w^k\to 0\quad\text{weakly in $W^{1,p}_0(\Omega')$ and strongly in $L^{p'}(\Omega')$},
\end{equation*}
in view of~\eqref{eq:zk-conv}. Therefore, by Proposition~\ref{prop:equiv}(b), there exists $(v^k)_k\subset H^{s,p}(\mathbb{R}^n)$ such that 
\begin{equation*}
\nabla^s v^k=\nabla (\psi w^k)\quad\text{in $\mathbb{R}^n$},
\end{equation*}
and by exploiting the estimate~\eqref{eq:lap-est} we derive
\begin{equation*}
v^k\to 0\quad\text{weakly in $H^{s,p'}(\mathbb{R}^n)$ and strongly in $L^{p'}(\mathbb{R}^n)$}.
\end{equation*}
Therefore, we can apply Theorem~\ref{thm:s-simple-divcurl} to derive 
\begin{equation}\label{eq:mixed4}
\lim_{k\to\infty}\int_{\mathbb{R}^n}\varphi F^k\cdot\nabla (\psi w^k)\,{\rm d}x=\lim_{k\to\infty}\int_{\mathbb{R}^n}\varphi F^k\cdot\nabla^s v^k\,{\rm d}x=0.
\end{equation}
In view of~\eqref{eq:mixed1}--\eqref{eq:mixed4} we finally get~\eqref{eq:mixed-divcurl}.
\end{proof}

The last version of the fractional Div–Curl lemma we present is a mixed formulation, stated in the simplified setting of Theorem~\ref{thm:simple-divcurl}, in which the classical gradient in the product is replaced by a fractional gradient.

\begin{theorem}[Second mixed local-nonlocal Div-Curl lemma]\label{thm:mixed2}
Let $(F^k)_k\!\subset\! L^p_{\rm loc}(\Omega;\mathbb{R}^n)$, $F^\infty\in L^p_{\rm loc}(\Omega;\mathbb{R}^n)$, $(w^k)_k\subset  H^{s,p'}(\mathbb{R}^n)$, and $w^\infty \in H^{s,p'}(\mathbb{R}^n)$ satisfy as $k\to\infty$
\begin{align}
&F^k\to F^\infty& &\text{weakly in $L^p_{\rm loc}(\Omega;\mathbb{R}^n)$},\label{eq:mixed-weakconv-21}\\
&\nabla^s w^k\to \nabla^s w^\infty& &\text{weakly in $L^{p'}_{\rm loc}(\Omega;\mathbb{R}^n)$}.\label{eq:mixed-weakconv-22}
\end{align}
Assume that as $k\to\infty$
\begin{equation}\label{eq:mixed-divvk-2}
\operatorname{div} F^k\to\operatorname{div} F^\infty \quad\text{strongly in $W^{-1,p}_{\rm loc}(\Omega)$}.
\end{equation}
Then, as $k\to\infty$
\begin{equation}\label{eq:mixed-divcurl-2}
F^k\cdot \nabla^s w^k\to F^\infty\cdot \nabla^s w^\infty\quad\text{weakly* in $\mathcal{D}'(\Omega)$}.
\end{equation}
\end{theorem}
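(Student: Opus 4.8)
The plan is to reduce Theorem~\ref{thm:mixed2} to the classical simplified Div--Curl lemma, Theorem~\ref{thm:simple-divcurl}, by replacing each Riesz fractional gradient $\nabla^s w^k$ by the ordinary gradient of a scalar potential. By Proposition~\ref{prop:equiv}(a), applied with exponent $p'$, for every $k\in\mathbb{N}$ there exists $u^k\in W^{1,p'}_{\rm loc}(\mathbb{R}^n)$ with $\nabla u^k=\nabla^s w^k$ in $\mathbb{R}^n$, and likewise $u^\infty\in W^{1,p'}_{\rm loc}(\mathbb{R}^n)$ with $\nabla u^\infty=\nabla^s w^\infty$ in $\mathbb{R}^n$. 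In particular $F^k\cdot\nabla^s w^k=F^k\cdot\nabla u^k\in L^1_{\rm loc}(\Omega)$, so \eqref{eq:mixed-divcurl-2} is precisely the assertion that $F^k\cdot\nabla u^k\to F^\infty\cdot\nabla u^\infty$ weakly* in $\mathcal{D}'(\Omega)$. This is exactly the conclusion of Theorem~\ref{thm:simple-divcurl}, whose hypotheses on $(F^k)_k$ (weak convergence in $L^p_{\rm loc}$, \eqref{eq:mixed-weakconv-21}) and on $(\operatorname{div}F^k)_k$ (strong convergence in $W^{-1,p}_{\rm loc}$, \eqref{eq:mixed-divvk-2}) are assumed; the only missing ingredient is that, for a suitable choice of the potentials, $u^k\to u^\infty$ weakly in $W^{1,p'}_{\rm loc}(\Omega)$.

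Establishing this weak convergence is the substantive step. Since the conclusion is local, I would fix $\varphi\in C_c^\infty(\Omega)$ and, using a partition of unity subordinate to a finite cover of $\operatorname{supp}\varphi$ by balls compactly contained in $\Omega$, reduce to the case $\operatorname{supp}\varphi\subset\Omega'\Subset\Omega$ with $\Omega'$ a bounded \emph{connected} open set with smooth boundary (e.g.\ a ball). The potentials $u^k$ are determined only up to additive constants, so I would normalize them by subtracting their mean over $\Omega'$ (which does not alter $\nabla u^k=\nabla^s w^k$), and normalize $u^\infty$ in the same way. By \eqref{eq:mixed-weakconv-22}, the sequence $(\nabla u^k)_k=(\nabla^s w^k)_k$ is bounded in $L^{p'}(\Omega';\mathbb{R}^n)$ (weakly convergent sequences are bounded), so the Poincaré--Wirtinger inequality on $\Omega'$ shows that $(u^k)_k$ is bounded in $W^{1,p'}(\Omega')$. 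By reflexivity, along a subsequence $u^k\rightharpoonup\tilde u$ weakly in $W^{1,p'}(\Omega')$; passing to the limit in $\nabla u^k=\nabla^s w^k\rightharpoonup\nabla^s w^\infty=\nabla u^\infty$ gives $\nabla\tilde u=\nabla u^\infty$ on $\Omega'$, hence $\tilde u=u^\infty$ because $\Omega'$ is connected and both have zero mean over $\Omega'$ (using Rellich's theorem to pass the mean to the limit). Since the limit is independent of the subsequence, the whole sequence satisfies $u^k\rightharpoonup u^\infty$ weakly in $W^{1,p'}(\Omega')$, hence weakly in $W^{1,p'}_{\rm loc}(\Omega')$.

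With this at hand, I would apply Theorem~\ref{thm:simple-divcurl} on the open set $\Omega'$ — whose hypotheses ($F^k\rightharpoonup F^\infty$ in $L^p_{\rm loc}(\Omega')$, $u^k\rightharpoonup u^\infty$ in $W^{1,p'}_{\rm loc}(\Omega')$, $\operatorname{div}F^k\to\operatorname{div}F^\infty$ strongly in $W^{-1,p}_{\rm loc}(\Omega')$) are now all verified — to obtain $F^k\cdot\nabla u^k\to F^\infty\cdot\nabla u^\infty$ weakly* in $\mathcal{D}'(\Omega')$. Testing against $\varphi\in C_c^\infty(\Omega')$ and recalling that $\nabla u^k=\nabla^s w^k$ and $\operatorname{supp}\varphi\subset\Omega'$ yields $\int_\Omega\varphi F^k\cdot\nabla^s w^k\,{\rm d}x\to\int_\Omega\varphi F^\infty\cdot\nabla^s w^\infty\,{\rm d}x$, which is \eqref{eq:mixed-divcurl-2}.

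I expect the only real obstacle to be the second step: choosing the primitives $u^k$ in a coherent way so that they (and not merely their gradients) converge, which is why the mean-zero normalization on a fixed connected $\Omega'$, together with Poincaré--Wirtinger and Rellich, is essential. It is worth stressing that here — unlike in Theorem~\ref{thm:s-simple-divcurl} — no global information on $w^k$ over $\mathbb{R}^n$ is available, since $\nabla^s w^k$ is assumed to converge only in $L^{p'}_{\rm loc}(\Omega)$; passing to local primitives via Proposition~\ref{prop:equiv}(a) is precisely what reduces the nonlocal statement to the already proven classical one.
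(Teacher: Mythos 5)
Your proposal is correct and follows essentially the same route as the paper: both pass to local primitives via Proposition~\ref{prop:equiv}(a), normalize by subtracting constants so that Poincar\'e gives boundedness in $W^{1,p'}(\Omega')$, and then invoke Theorem~\ref{thm:simple-divcurl}. The only cosmetic difference is where the subsequence principle is applied --- you identify the weak limit of the normalized potentials directly (mean-zero normalization plus connectedness of $\Omega'$), whereas the paper leaves the subsequential limit $z^\infty$ unidentified, only observes $\nabla z^\infty=\nabla^s w^\infty$ on $\Omega'$, and applies the subsequence principle at the level of the final distributional convergence.
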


\begin{proof}
We fix a bounded open set $\Omega'\Subset\Omega$ with smooth boundary. 

Let $(u^k)_k\in W^{1,p}_{\rm loc}(\mathbb{R}^n)$ and $u^\infty\in W^{1,p}_{\rm loc}(\mathbb{R}^n)$ be the functions given by Proposition~\ref{prop:equiv}(a) associated to $(w^k)_k\subset H^{s,p'}(\mathbb{R}^n)$ and $w^\infty\in H^{s,p'}(\mathbb{R}^n)$, respectively. By Poincare's inequality, there exists a constant $C=C(n,p,\Omega')>0$, independent of $k\in\mathbb{N}$, and a sequence $(c^k)_k\subset\mathbb{R}$ satisfying for all $k\in\mathbb N$
\begin{equation*}
\|u^k-c^k\|_{L^p(\Omega')}\le C\|\nabla u^k\|_{L^p(\Omega';\mathbb{R}^n)}=C\|\nabla^s w^k\|_{L^p(\Omega';\mathbb{R}^n)}\le C.
\end{equation*}
Hence, we can find a subsequence $(k_h)_h\subset \mathbb N$ and function $z^\infty\in W^{1,p}(\Omega')$ such that as $h\to\infty$
\begin{equation}\label{eq:weak-gk}
z^{k_h}\coloneq u^{k_h}-c^{k_h}\to z^\infty\quad\text{weakly in $W^{1,p}(\Omega')$}.
\end{equation}
By~\eqref{eq:mixed-weakconv-22}, for all $\Psi\in C_c^\infty(\Omega';\mathbb{R}^n)$ we have
\begin{align*}
\int_{\Omega'} \nabla z^\infty\cdot\Psi\,{\rm d}x&=\lim_{h\to\infty}\int_{\Omega'} \nabla z^{k_h}\cdot\Psi\,{\rm d}x\\
&=\lim_{h\to\infty}\int_{\Omega'} \nabla u^{k_h}\cdot\Psi\,{\rm d}x=\lim_{h\to\infty}\int_{\Omega'} \nabla^s w^{k_h}\cdot\Psi\,{\rm d}x=\int_{\Omega'}\nabla^s w^\infty\cdot\Psi\,{\rm d}x.
\end{align*}
Hence, $\nabla w^\infty=\nabla^s w^\infty$ in $\Omega'$. Thus in view of~\eqref{eq:mixed-weakconv-21},~\eqref{eq:mixed-divvk-2}, and~\eqref{eq:weak-gk} we can apply Theorem~\ref{thm:simple-divcurl} and obtain that as $h\to\infty$
\begin{align*}
F^{k_h}\cdot \nabla^s w^{k_h}= F^{k_h}\cdot \nabla z^{k_h} \to F^\infty\cdot \nabla z^\infty= F^\infty\cdot \nabla^s w^\infty\quad\text{weakly* in $\mathcal{D}'(\Omega')$}.
\end{align*}
Since this holds for every subsequence, we derive~\eqref{eq:mixed-divcurl-2} in $\Omega'$. By the arbitrariness of $\Omega'\Subset\Omega$, the conclusion follows.
\end{proof}

\begin{remark}
We point out that these two versions of the mixed local-nonlocal Div–Curl lemma, although partial (since they do not involve the fractional curl), still have useful applications. For instance, although not stated explicitly, such results are used in the proof of the compactness of fractional linear operators with oscillating coefficients in~\cite[Theorem~3.1]{CCM24}.

It is clear that, by exploiting the definition of $\nabla^s$, one can also introduce a fractional curl operator, following Definition~\ref{def:curl}. Therefore, it would be interesting to extend these fractional Div–Curl lemmas to the case where the second sequence has a strongly converging fractional curl. The main issues are twofold. On the one hand, one must find suitable elliptic regularity estimates in order to derive a fractional Helmholtz decomposition. On the other hand, as explained in Remark~\ref{rem:conv}, the strong convergence~\eqref{eq:s-simple-divvk2} is required to hold on the entire space $\mathbb{R}^n$, which is not straightforward to obtain from elliptic regularity results, as these are typically local.
\end{remark}


\begin{acknowledgement}
The author wishes to thank Alessandro Carbotti and Alberto Maione for many helpful discussions on the topic.
\end{acknowledgement}


\ethics{Funding Information}{The author is a member of the Gruppo Nazionale per l'Analisi Matematica, la Probabilit\`a e le loro Applicazioni (INdAM-GNAMPA), and acknowledges the support of the the INdAM-GNAMPA 2025 Project ``DISCOVERIES - Difetti e Interfacce in Sistemi Continui: un’Ottica Variazionale in Elasticità con Risultati Innovativi ed Efficaci Sviluppi'' (CUP: E5324001950001). The author acknowledges also the support of the Centre de Recerca Matemàtica of Barcelona (CRM), under the International Programme for Research in Groups (IP4RG). Finally, the author has been founded by the European Union-NextGenerationEU under the Italian Ministry of University and Research (MUR) National Centre for HPC, Big Data and Quantum Computing (CN\_00000013 – CUP: E13C22001000006).}


\ethics{Competing Interests}{The author has no conflicts of interest to declare that are relevant to the content of this chapter.}



\addcontentsline{toc}{section}{Appendix}

\end{document}